\theoremstyle{thmstyleone}%
\newtheorem{theorem}{Theorem}
\newtheorem{proposition}[theorem]{Proposition}
\newtheorem{lemma}[theorem]{Lemma}%
\theoremstyle{thmstyletwo}%
\newtheorem{example}{Example}%
\newtheorem{construction}{Construction}%
\theoremstyle{thmstylethree}%
\newtheorem{definition}{Definition}%
\newtheorem{corollary}[theorem]{Corollary}
\newcommand{\rmv}[1]{}
\begin{document}

\title[OCAs and Upper Bounds on Covering Codes in NRT spaces]{Ordered Covering Arrays and Upper Bounds on Covering Codes in NRT spaces}


\author*[1]{\fnm{Andr\'{e}} \sur{Guerino Castoldi}}\email{andrecastoldi@utfpr.edu.br}

\author[2]{\fnm{Emerson} \sur{L. Monte Carmelo}}\email{elmcarmelo@uem.br}

\author[3]{\newline\fnm{Lucia} \sur{Moura}}\email{lmoura@uottawa.ca}

\author[4]{\fnm{Daniel} \sur{Panario}}\email{daniel@math.carleton.ca}

\author[4]{\fnm{Brett} \sur{Stevens}}\email{brett@math.carleton.ca}

\affil*[1]{\orgdiv{Departamento Acad\^{e}mico de Matem\'{a}tica}, 
\orgname{Universidade Tecnol\'{o}gica Federal do Paran\'{a}}, 
\orgaddress{\street{Via do conhecimento, Km 1}}, 
\city{Pato Branco}, \postcode{85503-390}, 
\state{Paran\'{a}}, \country{Brazil}}

\affil[2]{\orgdiv{Departamento de Matem\'{a}tica}, 
\orgname{Universidade Estadual de Maring\'{a}}, 
\orgaddress{\street{Av. Colombo, 5790}}, 
\city{Maring\'{a}}, \postcode{87020-900}, 
\state{Paran\'{a}}, \country{Brazil}}

\affil[3]{\orgdiv{School of Electrical Engineering and Computer Science}, 
\orgname{University of Ottawa}, \orgaddress{\street{800 King Edward St.}}, 
\city{Ottawa}, \postcode{K1K 6N5}, 
\state{Ontario}, \country{Canada}}

\affil[4]{\orgdiv{School of Mathematics and Statistics}, 
\orgname{Carleton University}, \orgaddress{\street{\hspace{1cm} 1125 Colonel By Drive}}, 
\city{Ottawa}, \postcode{K1S 5B6}, 
\state{Ontario}, \country{Canada}}


\maketitle


\abstract{This work shows several direct and recursive constructions of ordered covering arrays using projection, fusion, column augmentation, derivation, concatenation and cartesian product. Upper bounds on covering codes in NRT spaces are also obtained by improving a general upper bound. 
We explore the connection between ordered covering arrays and covering codes in NRT spaces, which generalize similar results for the Hamming metric. Combining the new upper bounds for covering codes in NRT spaces and ordered covering arrays, we improve upper bounds on covering codes in NRT spaces for larger alphabets.
We give tables comparing the new upper bounds for covering codes to existing ones.}

\maketitle

\section{Introduction}

Covering codes deal with the following question: given a metric space, what is the minimum number of balls of fixed radius necessary to cover the entire space?
Several applications, such as data transmission, cellular telecommunications, decoding of errors, and football pool problem, have motivated the investigation of covering codes in Hamming spaces. Covering codes also have connections with other branches of mathematics and computer science, such as finite fields, linear algebra, graph theory, combinatorial optimization, mathematical programming, and metaheuristic search. We refer the reader to the book by Cohen et al.~\cite{cohen1997covering} for an overview of the topic.

Rosenbloom and Tsfasman \cite{rosenbloom1997codes} introduced a metric on
linear spaces over finite fields, motivated by possible applications
to interference in parallel channels of communication systems. This metric,
implicitly posed by Niederreiter \cite{N}, is currently known as the
Niederreiter-Rosenbloom-Tsfasman (NRT) metric (or RT metric or $\rho$ metric sometimes) and is an example of poset metric \cite{firer}.
Since the NRT metric generalizes the Hamming metric, central concepts on codes in
Hamming spaces have been investigated in NRT spaces, such as perfect
codes, maximum distance separable (MDS) codes, linear codes, weight distribution, packing and covering problems \cite{brualdi1995codes,castoldi2015covering,castoldi2018partial,N,rosenbloom1997codes,yildiz2010covering}.  We recommend the book by Firer et al.~\cite{firer} for further concepts and applications of the NRT metric to coding theory.

Covering codes in the NRT metric have been less studied than their packing code counterparts~\cite{firer}.
Brualdi et al.~\cite[Theorem 2.1]{brualdi1995codes} implicitly investigates covering codes in the metric space endowed with a poset metric, more specifically, when the poset is a chain. The same result is explicitly given by Yildiz et al.~\cite[Theorem 2.3]{yildiz2010covering}.  The general problem of covering codes in the NRT space is proposed by Castoldi and Monte Carmelo in \cite{castoldi2015covering}, which deals mainly with upper bounds, recursive relations and some sharp bounds as well as relations with MDS codes.
More recently, the sphere covering bound in NRT spaces is improved in \cite{castoldi2018partial} under some conditions by generalizing the excess counting method.
In the present work, we explore upper bounds and recursive relations for covering codes in NRT spaces using ordered covering arrays (OCAs).

Orthogonal arrays (OAs) play a central role in combinatorial designs with close connections to coding theory; see the book on orthogonal arrays by Hedayat et al.~\cite{hedayat2012orthogonal}. Covering arrays (CAs), also called $t$-surjective arrays, generalize orthogonal arrays and have been received a lot of attention due to their applications to software testing and interesting connections with other combinatorial designs; see the survey paper by Colbourn \cite{colbourn2004combinatorial}.

Ordered orthogonal arrays (OOAs) are a generalization of orthogonal arrays introduced in 1996 independently by
Lawrence \cite{lawrence1996combinatorial} and Mullen and Schmid \cite{mullen1996equivalence}, motivated by their applications to numerical integration. OOAs are also related to the NRT metric, see \cite{castoldi2017ordered}. A survey of constructions of ordered orthogonal arrays is in \cite[Chapter 3]{krikorian2011combinatorial}; see also \cite[Section VI.59.3]{HandbookColbourn}. For a survey of finite field constructions of OAs, CAs and OOAs, see Moura et al.~\cite[Section 3]{mullen2016survey}.

OCAs have been introduced by Krikorian in her master's thesis~\cite{krikorian2011combinatorial}, generalizing several of the mentioned designs (OAs, CAs and OOAs).
Krikorian \cite{krikorian2011combinatorial} investigates recursive and Roux-type
constructions of OCAs as well as other constructions using the columns of a covering array and discusses an application of OCAs to numerical integration (evaluating multi-dimensional integrals). In the present paper, we also give further results for OCAs.

 In this work,
we extend and build upon our results from the conference paper \cite{castoldi2019cai} in the following directions. We give several recursive constructions of OCAs based on projection, fusion, column augmentation, derivation,  and concatenation.
A new approach for recursive construction of OCAs is presented based on column augmentation by adding a new chain to the NRT poset and adding new rows using a cartesian product of suitable arrays (Theorem~\ref{t1}). As a consequence, we obtain a 
bound for the size of OCAs with strength $t$, alphabet $v$ a prime power and NRT posets with $v+2$ chains of length $t$ (Corollary~\ref{corprimepower}).
New upper bounds on covering codes in NRT spaces are obtained by modifying some of the code's codewords that give the general upper bound (Proposition \ref{triviais}) to reduce
the size of the covering code.
We apply the newly found OCA bounds and covering code bounds to obtain new upper bounds on
covering codes in NRT spaces for larger alphabets.

This work is organized as follows. 
We review the basics of the NRT poset, the NRT metric and covering codes in NRT spaces in Section~\ref{sec1}, and define OAs, CAs, OOAs and OCAs in Section \ref{sec2}. Section \ref{sec4} is devoted to new recursive constructions of OCAs yielding recursive relations for OCA numbers.  We generalize and improve upper bounds on covering codes in Section \ref{sec5}. Finally, in Section \ref{sec6}, constructions of covering codes in NRT spaces are derived from OCAs. Tables \ref{tab3} and \ref{tab2} contrast some upper bounds obtained in this paper.

\section{Preliminaries} \label{sec1-2}


\subsection{The Niederreiter-Rosenbloom-Tsfasman metric and covering codes} \label{sec1}

Any poset induces a metric, according to the seminal paper by Brualdi~\cite{brualdi1995codes}. 
Codes based on various poset metrics are presented in a systematic way in the book by Firer et al.~\cite{firer}, and we follow their notation.
Here we include some basic definitions for the NRT poset and metric; we also introduce covering codes under the NRT metric, 
first studied in~\cite{castoldi2015covering}, which is one of the focal points of this paper.

Let $P$ be a finite partial
ordered set (poset) and denote its partial order relation by
$\preceq$. A poset is a {\it chain} when any two elements
are comparable; a poset is an {\it anti-chain} when no two
distinct elements are comparable. A subset $I$ of $P$ is an {\it ideal} of $P$ if $b\in I$ and $a\preceq b$, implies $a\in I$.
The {\it ideal generated} by a subset $A$ of $P$ is the ideal
of smallest cardinality that contains $A$, denoted by
$\langle A\rangle $. 

An element $a\in I$ is {\it maximal in I}
if $a\preceq b$ implies that $b=a$. Analogously, an element
$a\in I$ is {\it minimal in I} if $b\preceq a$ implies that
$b=a$. A subset $J$ of $P$ is an {\it anti-ideal} of $P$ when it is
the complement of an ideal of $P$. If an ideal $I$ has
$t$ elements, then its corresponding anti-ideal has $n-t$
elements, where $n$ is the number of elements in $P$. 

Given positive integers $m$ and $s$, let $\mathcal{R}[m \cdot s]$ be a
set of $ms$ elements partitioned into $m$ blocks $B_{i}$ having
$s$ elements each, where $B_{i}=\{b_{is+1},\ldots, b_{(i+1)s}\}$
for $i=0,\ldots,m-1$ and the elements of each block are ordered
as $b_{is+1} \preceq b_{is+2} \preceq \cdots \preceq b_{(i+1)s}$.
The set $\mathcal{R}[m \cdot s]$ is a poset consisting of the
union of $m$ disjoint chains, each one having $s$ elements. This poset is known as the \emph{Niederreiter-Rosenbloom-Tsfasman poset
$\mathcal{R}[m \cdot s]$}, or briefly the NRT poset $\mathcal{R}[m \cdot s]$. When
$\mathcal{R}[m \cdot s]=[m \cdot s]:=\{1,\ldots,ms\}$, the NRT poset
$\mathcal{R}[m \cdot s]$ is denoted by NRT poset $[m \cdot s]$ and its
blocks are $B_{i}=\{is+1,\ldots, (i+1)s\}$, for $i=0,\ldots,m-1$.



Given the NRT poset $[m\cdot s]$, the {\it NRT distance} between $x=(x_{1},\ldots,x_{ms})$ and
$y=(y_{1},\ldots,y_{ms})$ in $\mathbb{Z}_{q}^{ms}$ is defined in
\cite{brualdi1995codes} as
\begin{center}
 $d_{\mathcal{R}}(x,y)=$\textbar$\langle supp(x-y)\rangle$\textbar$=$\textbar$\langle \{i: x_{i}\neq y_{i}\} \rangle$\textbar.   
\end{center}
A set $\mathbb{Z}_{q}^{ms}$ endowed with the distance $d_{\mathcal{R}}$ is a
{\it Niederreiter-Rosenbloom-\linebreak Tsfasman space}, or simply, an {\it NRT space}. The notation $d_{\mathcal{R}[m\cdot s]}$ can be used to emphasize the structure of the NRT poset $[m\cdot s]$.

The NRT sphere centered at $x$ of radius $R$, denoted by
$B_{d_{\mathcal{R}}}(x,R)=$ \linebreak $\{y\in \mathbb{Z}_{q}^{ms}: d_{\mathcal{R}}(x,y)\leq R\},$
has cardinality given by the formula
\begin{equation*}
V_{q}^{\mathcal{R}}(m,s,R)=1+\sum_{i=1}^{R}\sum_{j=1}^{\min\{m,i\}}
    q^{i-j}(q-1)^{j}\Omega_{j}(i), 
\end{equation*}
where, for $1 \leq i \leq ms$ and $1 \leq j \leq \min\{m,i\},$ the
parameter $\Omega_{j}(i)$ denotes the number of ideals of the
NRT poset $[m \cdot s]$ whose cardinality is $i$ with exactly $j$
maximal elements. The case $s=1$ corresponds to the Hamming sphere $V_{q}(m,R)=V_{q}^{\mathcal{R}}(m,1,R)$.

\rmv{
As expected, the case $s=1$ corresponds to the classical Hamming
sphere. Indeed, each subset produces an ideal formed by minimal
elements of the anti-chain $[m\cdot 1]$, thus the parameters
$\Omega_{i}(i)={m \choose i}$ and $\Omega_{j}(i)=0$ for $j<i$ yield
\begin{align}
\label{bola}
V_{q}(m,R)=V_{q}^{\mathcal{R}}(m,1,R)=1+\sum_{i=1}^{R}(q-1)^{i}{m \choose i}.
\end{align}

In contrast with the Hamming space, the computation of the sum
in Eq.~\ref{lenrt} is not a feasible procedure for a general
NRT space. In addition to the well studied case $s=1$, it is known that
$V_{q}^{\mathcal{R}}(1,s,R)=q^R$ for a space induced by a chain
$[1\cdot s]$, see \cite[Theorem 2.1]{brualdi1995codes} and
\cite{yildiz2010covering}. Also, it is proved in
\cite[Corollary 1]{castoldi2018partial} that, for $R\leq s$,
$$V_{q}^{\mathcal{R}}(m,s,R)=1+\sum_{i=1}^{R}\sum_{j=1}^{\min\{m,i\}}
q^{i-j}(q-1)^{j}{m\choose j}{i-1\choose j-1}.$$
}

\begin{definition}
Given an NRT poset $[m \cdot s]$, a subset $C$ of
$\mathbb{Z}_{q}^{ms}$ is an $R$-{\it covering} of
the NRT space $\mathbb{Z}_{q}^{ms}$ if
for every $x\in \mathbb{Z}_{q}^{ms}$ there is a codeword
$c\in C$ such that $d_{\mathcal{R}}(x,c)\leq R$, or equivalently,
\[
\bigcup_{c\in C}B_{d_{\mathcal{R}}}(c,R)=\mathbb{Z}_{q}^{ms}.
\]
The number $K_{q}^{\mathcal{R}}(m,s,R)$ is the smallest cardinality
of an $R$-covering of the NRT space $\mathbb{Z}_{q}^{ms}$.
\end{definition}

We refer the reader to \cite{castoldi2015covering} for more details on covering codes in NRT spaces.
In \cite{castoldi2015covering,castoldi2018partial,castoldi2019cai} the notation $K_{q}^{RT}(m,s,R)$ is used instead of $K_{q}^{\mathcal{R}}(m,s,R)$. 
In this work, we choose the 
latter from \cite{firer} to use a more current and simplified notation.

In the particular case of $s=1$, an anti-chain $[m\cdot 1]$ induces covering codes in Hamming spaces and the numbers $K_{q}^{\mathcal{R}}(m,1,R)=K_{q}(m,R)$. Determining these numbers is a challenging problem in combinatorial coding theory \cite{cohen1997covering}.  On the other hand, the numbers $K_{q}^{\mathcal{R}}(1,s,R)$ are induced by a chain $[1\cdot s]$, and were completely evaluated in \cite{brualdi1995codes,yildiz2010covering}. The literature on $K_{q}^{\mathcal{R}}(m,s,R)$ for a general NRT poset remains short \cite{castoldi2015covering,castoldi2018partial,castoldi2019cai}.

One upper bound which we will improve is the general upper bound given below.


\begin{proposition}(\cite[Proposition 6]{castoldi2015covering})\label{triviais}
Let $m$ and $s$ be positive integers. For every $q\geq 2$ and $R$ such that $0<R<ms$,
$$ K_{q}^{\mathcal{R}}(m,s,R) \leq q^{ms-R}.$$
\end{proposition}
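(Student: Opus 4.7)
The plan is to exhibit an explicit covering code of size $q^{ms-R}$ by fixing an appropriately chosen set of $R$ coordinates to $0$, where the set of fixed coordinates forms an ideal of the NRT poset $[m\cdot s]$.

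First I would verify that an ideal $I$ of size exactly $R$ always exists in the NRT poset $[m\cdot s]$ whenever $0<R<ms$. Writing $R=ks+r$ with $0\leq k\leq m-1$ and $0\leq r<s$, one can take the first $k$ chains in their entirety together with the $r$ smallest elements of the $(k+1)$-st chain; since each chain is totally ordered and the chains are pairwise incomparable, the resulting set is downward-closed and has exactly $R$ elements.

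Next I would set
\[
C=\{c\in\mathbb{Z}_{q}^{ms}: c_{i}=0\text{ for every } i\in I\},
\]
so that $|C|=q^{ms-R}$ because the coordinates indexed by $[m\cdot s]\setminus I$ are free. To verify that $C$ is an $R$-covering, take an arbitrary $x\in\mathbb{Z}_{q}^{ms}$ and define $c$ by $c_{i}=x_{i}$ for $i\notin I$ and $c_{i}=0$ for $i\in I$. Then $c\in C$ and $\mathrm{supp}(x-c)\subseteq I$. Because $I$ is an ideal, $\langle\mathrm{supp}(x-c)\rangle\subseteq\langle I\rangle=I$, and therefore
\[
d_{\mathcal{R}}(x,c)=|\langle\mathrm{supp}(x-c)\rangle|\leq|I|=R.
\]
This gives $K_{q}^{\mathcal{R}}(m,s,R)\leq|C|=q^{ms-R}$.

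There is essentially no hard step here; the only thing one has to be careful about is the choice of the set of fixed coordinates. A random set of $R$ positions would not work, because the generated ideal could be strictly larger than $R$ (in fact as large as $ms$ if it contains a maximal element of a chain). Selecting $I$ to already be an ideal is what keeps the distance bounded by $R$ rather than by the size of $\langle S\rangle$ for an arbitrary $R$-subset $S$.
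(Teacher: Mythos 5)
Your proof is correct and is essentially the same construction the paper relies on: the code consisting of all words vanishing on an ideal of size $R$ (whole chains plus a down-set of the next chain), which is exactly the covering code invoked "according to the proof of Proposition \ref{triviais}" at the start of the proofs of Theorems \ref{bigcaseodd} and \ref{caseven}. Your remark that the fixed positions must form an ideal (so that $\langle \mathrm{supp}(x-c)\rangle \subseteq I$) correctly identifies the one point where the argument could go wrong.
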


\subsection{Ordered covering arrays}\label{sec2}

Ordered orthogonal arrays are related to classical combinatorial designs, namely covering arrays (CAs) and orthogonal arrays (OAs).
We refer the reader to a survey paper by Colbourn~\cite{colbourn2004combinatorial} on CAs , 
a book by Hedayat et al.~\cite{hedayat2012orthogonal} on OAs, and a survey paper by Moura et al.\cite[Section 3]{mullen2016survey} for their relations to ordered orthogonal arrays.

Let $t$,  $v$, $\lambda$, $n$, $N$ be positive integers
 and $N\geq\lambda v^{t}$. Let $A$
be an $N\times n$ array over an alphabet $V$ of size $v$. An
$N\times t$ subarray of $A$ is {\it $\lambda$-covered} if it
has each $t$-tuple over $V$ as a row at least $\lambda$ times.
A set of $t$ columns of $A$ is {\it $\lambda$-covered} if the $N\times t$
subarray of $A$ formed by them is $\lambda$-covered; when $\lambda = 1$, we say the set of columns is covered and often omit $\lambda$ from the notation.

\begin{definition}(CA and OA)
Let $N$, $n$, $v$ and $\lambda$ be positive integers such that $2\leq t \leq n$. A  {\it covering array} $CA_{\lambda}(N;t,n,v)$ is an $N\times n$ array $A$ with entries from a set $V$ of size $v$  such that any $t$-set of columns of $A$ is $\lambda$-covered. The parameter $t$ is the {\it strength} of the covering array. The {\it covering array number}  $CAN_{\lambda}(t,n,v)$ is the smallest positive integer $N$ such that a $CA_{\lambda}(N;t,n,v)$ exists. An {\it orthogonal array}, denoted by $OA_{\lambda}(\lambda v^{t};t,n,v)$ or simply by $OA_{\lambda}(t,n,v)$, has a similar definition with the additional requirement that the $\lambda$-coverage must be ``exact'', and as a consequence it 
is the same as a covering array with $N=\lambda v^t$. 
\end{definition}

Roughly speaking, each column in a CA or OA has the same ``importance'' towards coverage requirements. This happens because the set of columns is implicitly labeled by the anti-chain $\mathcal{R}[n \cdot 1]$. In contrast, the importance of each column in an OCA and OOA depends on the anti-ideals of the NRT poset $\mathcal{R}[m \cdot s]$.  
Ordered covering arrays are precisely defined as follows.

\begin{definition} (OCA and OOA)
Let $t$, $m$, $s$, $v$ and $\lambda$ be positive integers such
that $2\leq t \leq ms$. An {\it ordered covering array}
$OCA_{\lambda}(N;t,m,s,v)$ is an $N\times ms$ array $A$ with
entries from an alphabet $V$ of size $v$, whose columns are
labeled by an NRT poset $\mathcal{R}[m \cdot s]$.
For each anti-ideal $J$ of the NRT poset $\mathcal{R}[m \cdot s]$ with \textbar$J$\textbar$=t$,
the set of columns of $A$ labeled by $J$ is $\lambda$-covered.
The parameter $t$ is the {\it strength} of the ordered covering array.
The {\it ordered covering array number}  $OCAN_{\lambda}(t,m,s,v)$  is the
smallest positive integer $N$ such that there exists an $OCA_{\lambda}(N;t,m,s,v)$.
An {\it ordered orthogonal array} has the extra requirement of ``exact" $\lambda$-coverage and thus it is the same as an ordered covering array with $N=\lambda v^t$, denoted by $OOA_{\lambda}(\lambda v^{t};t, m,s,v)$ or simply by
$OOA_{\lambda}(t,m,s,v)$. When $\lambda=1$ we omit $\lambda$ from the notation.
\end{definition}

Ordered covering arrays were first studied by Krikorian~\cite{krikorian2011combinatorial}.
Ordered covering arrays are special cases of variable strength covering arrays \cite{raaphorst2013variable,raaphorst2018variable}, 
where the sets of columns covered are specified by a general hypergraph.

\begin{example}\label{ocaexample}
An OCA of strength 2 with 5 rows:
\[
OCA(5;2,4,2,2)=
\begin{array}{c}
\begin{array}{cc|cc|cc|cc} \ 1 & 2 & 3 & 4 & 5 & 6 & 7 & 8\hspace{1.5mm}  \ \\
\end{array}\\
\left[
\begin{array}{cc|cc|cc|cc}
0 & 1 & 0 & 1 & 0 & 1 & 0 & 1  \\
1 & 1 & 1 & 0 & 0 & 0 & 0 & 0  \\
0 & 0 & 1 & 1 & 1 & 0 & 1 & 0  \\
1 & 0 & 0 & 0 & 1 & 1 & 0 & 0  \\
0 & 0 & 0 & 0 & 0 & 0 & 1 & 1
\end{array}
\right].
\end{array}
\]
The columns of this array are labeled by $[4\cdot 2]=\{1,\ldots,8\}$
and the blocks of the NRT poset $[4 \cdot 2]$ are $B_{0}=\{1,2\}$,
$B_{1}=\{3,4\}$, $B_{2}=\{5,6\}$ are $B_{3}=\{7,8\}$. There are ten
anti-ideals of size 2, namely,
$$ \{1, 2\}, \{3, 4\}, \{5, 6\}, \{7, 8\}, \{2, 4\},
   \{2, 6\}, \{2, 8\}, \{4, 6\}, \{4, 8\}, \{6, 8\}.$$
The $5\times 2$ subarray constructed from each one of
theses anti-ideals covers all the pairs $(0,0)$, $(0,1)$, $(1,0)$
and $(1,1)$ at least once. The array above is not a $CA(5;2,8,2)$ because 
many pairs of columns are not covered, $\{1,4\}$ for example.
\rmv{On the other hand, when the set of columns is labeled by the anti-chain $[8\cdot 1]$, any two distinct columns give an anti-ideal. In particular, for the anti-chain $\{1,4\}$, the vector $(1,1)$ is not covered by the subarray induced by the columns 1 and 4. Therefore the array above is not a $CA(5;2,8,2)$.}
\end{example}

In an $OCA_{\lambda}(N;t,m,s,v)$ such that
$s>t$, each one of the first $s-t$ elements of a block in the NRT
poset $\mathcal{R}[m \cdot s]$ is not an element of any anti-ideal of size $t$.
 Therefore, a column labeled by one of these elements is not part of any $N\times t$ subarray that must be $\lambda$-covered in an OCA.
 So we assume $s\leq t$ from now on.

Two trivial relationships between
the ordered covering array number and the covering array
number $CAN_{\lambda}(t,n,v)$ are:
\begin{enumerate}
\item[(1)] $\lambda v^{t}\leq OCAN_{\lambda}(t,m,s,v) \leq
CAN_{\lambda}(t,ms,v) $;
\item[(2)] For $1\leq s \leq t\leq m$, $CAN_{\lambda}(t,m,v)\leq
OCAN_{\lambda}(t,m,s,v)$.
\end{enumerate}

We observe that if $N=\lambda v^t$,
an $OCA_{\lambda}(\lambda v^{t};t,m,s,v)$ is an ordered orthogonal
array $OOA_{\lambda}(\lambda v^{t};t,m,s,v)$. When $s=1$, an
$OCA_{\lambda}(N;t,m,1,v)$ is a covering array
$CA_{\lambda}(N;t,m,v)$.

The following OCA number is used throughout the paper. Let $s\geq 3$ and $v$ be a prime power; then according to \cite[Theorem 3]{castoldi2017ordered}, there exists an $OOA(v^s;s,v+1,s,v)$. Therefore, for  $2\leq m\leq v+1$,
\begin{equation} \label{eq1}
OCAN(s,m,s,v)=v^s.
\end{equation}

\section{New recursive constructions for ordered covering arrays}\label{sec4}

In this section, we show new recursive relations for ordered 
coverings arrays. Since the proofs of Propositions~\ref{prop8} and~\ref{fusion} and their consequences have already appeared in the extended abstract \cite{castoldi2019cai} 
we only state these results. The others results in this section are new 
recursive constructions for OCAs.

Proposition~\ref{prop8} gives constructions that show the size of a chain can be extended for $s=t$ and OCAN monotonicity for parameters $s$ and $m$.

\begin{proposition}
\label{prop8}
\cite[Proposition 2]{castoldi2019cai}
Let $N, t,m,s, v$ be positive integers. 
\begin{enumerate}
\item[(1)] The existence of an $OCA_{\lambda}(N;t,m,t-1,v)$
implies the existence of an  $OCA_{\lambda}(N;t,m,t,v)$.
\item[(2)] The existence of an
$OCA_{\lambda}(N;t,m,s+1,v)$ implies the existence of an 
$OCA_{\lambda}(N;t,m,s,v)$.
\item[(3)] The existence of an
$OCA_{\lambda}(N;t,m+1,s,v)$ implies the existence of an 
$OCA_{\lambda}(N;t,m,s,v)$.
\end{enumerate}
\end{proposition}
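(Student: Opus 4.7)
My plan is to prove each of the three implications by a direct combinatorial construction on the given ordered covering array, verifying that every size-$t$ anti-ideal of the target NRT poset remains $\lambda$-covered.

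Parts (2) and (3) are pure restriction arguments. For part (2), I take an $OCA_{\lambda}(N;t,m,s+1,v)$ and delete the smallest element of each chain, leaving $ms$ columns that I label by $\mathcal{R}[m\cdot s]$ via the identification of the top $s$ elements of each chain of $\mathcal{R}[m\cdot (s+1)]$ with the chains of $\mathcal{R}[m\cdot s]$. A size-$t$ anti-ideal $J$ of $\mathcal{R}[m\cdot s]$, being a union of chain-suffixes of total size $t$, lifts under this identification to a size-$t$ anti-ideal $J'$ of $\mathcal{R}[m\cdot (s+1)]$ that avoids every deleted bottom element, and the corresponding $N\times t$ subarrays are literally the same, so $\lambda$-coverage transfers. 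Part (3) is analogous: from an $OCA_{\lambda}(N;t,m+1,s,v)$, delete any one chain of $s$ columns; each size-$t$ anti-ideal of $\mathcal{R}[m\cdot s]$ is itself a size-$t$ anti-ideal of $\mathcal{R}[(m+1)\cdot s]$ that does not touch the deleted chain, so its subarray is $\lambda$-covered by hypothesis.

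Part (1) will be the main obstacle because it requires adding $m$ genuinely new columns rather than merely restricting. The key structural observation is that the size-$t$ anti-ideals of $\mathcal{R}[m\cdot t]$ split into two disjoint families: the $m$ full chains $B'_0,\ldots,B'_{m-1}$, and the anti-ideals that avoid every bottom element (since any anti-ideal containing the bottom $b'_{it+1}$ of a chain $B'_i$ must contain all of $B'_i$, whose $t$ elements already exhaust the size budget, forcing the anti-ideal to equal $B'_i$). The second family is in natural bijection with the size-$t$ anti-ideals of $\mathcal{R}[m\cdot (t-1)]$ via the identification of the top $t-1$ of each $B'_i$ with the chain $B_i$ of $\mathcal{R}[m\cdot (t-1)]$. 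So I keep the given $m(t-1)$ columns as the top of each $B'_i$ and, at the newly introduced bottom position of $B'_i$, place a copy of the column indexed by the top element of some chain $B_{i'}$ with $i'\neq i$; such an $i'$ exists because the hypothesis $t\leq m(t-1)$ forces $m\geq 2$. Coverage of the second family is then automatic from the hypothesis, while coverage of each full chain $B'_i$ follows because the unordered multiset of its $t$ columns equals that of the size-$t$ anti-ideal $B_i\cup\{\text{top of }B_{i'}\}$ of $\mathcal{R}[m\cdot (t-1)]$, which is $\lambda$-covered by hypothesis, and $\lambda$-coverage is invariant under reordering columns. The delicate and only substantive step is precisely this identification of each new full-chain anti-ideal with an existing size-$t$ anti-ideal of the original OCA.
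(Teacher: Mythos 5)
Your proof is correct, and it follows the standard approach: parts (2) and (3) by restriction to columns whose size-$t$ anti-ideals persist, and part (1) by classifying the size-$t$ anti-ideals of $\mathcal{R}[m\cdot t]$ into full chains versus anti-ideals avoiding all minimal elements, then filling each new bottom position with a copy of a maximal-element column from a \emph{different} block — which is exactly the cross-block duplication the paper itself illustrates for $t=2$ around Eq.~(\ref{thm6}) and Fig.~\ref{fg1}. The paper defers the formal proof to \cite{castoldi2019cai}, but your argument, including the observation that $t\le m(t-1)$ guarantees the needed $i'\ne i$ and that $\lambda$-coverage is invariant under column permutation, is complete and matches that construction.
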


By Proposition \ref{prop8} items $(1)$ and $(2)$, there exists an  $OCA_{\lambda}(N;t,m,t,v)$ if and only if there exists an $OCA_{\lambda}(N;t,m,t-1,v)$, for $t\geq 2$. By this equivalence  when $t=2$, the right hand side OCA has $s=t-1=1$ which corresponds to a covering array. This also shows us
that the constraint $t>2$ must hold in order to have ordered covering arrays essentially different from covering arrays.

To illustrate how $s=t=2$  reduces to covering arrays, let us label the columns of a $CA_{\lambda}(N;2,m,v)$ by the elements of $[m]=\{1,\ldots,m\}$.
Make these columns of the $CA_{\lambda}(N;2,m,v)$ to correspond to columns of an $OCA_{\lambda}(2,m,2,v)$ for NRT poset
given in Fig.~\ref{fg1}. We use the notation $\overline{a}$ to duplicate $a \in[m]$ in the NRT
poset $\mathcal{R}[m\cdot 2]$ in such a way that $a$ and $\overline{a}$ are not comparable, but the columns of $OCA_{\lambda}(N;2,m,2,v)$ labeled by $a$ and $\overline{a}$ are equal.
\begin{figure}[h]
\begin{center}
\includegraphics{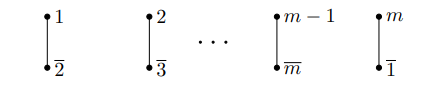}
\caption{Blocks of the NRT poset $\mathcal{R}[m\cdot 2]$.}
\label{fg1}
\end{center}
\end{figure} 

This construction implies 
\begin{equation}\label{thm6}
    OCAN_{\lambda}(2,m,2,v)=CAN_{\lambda}(2,m,v).
\end{equation}

In particular, since $CAN(2,m,2)$ is determined~\cite{kleitman1973families}, we get  
\begin{equation}\label{binCA}
OCAN(2,m,2,2)=\min\left\{N: 
m\leq {N-1 \choose \lfloor \frac{N}{2}\rfloor -1}\right\}.
\end{equation}

The next result shows a relationship for ordered covering arrays numbers over alphabets with different sizes. It is a generalization of
\cite[Lemma 3.1]{colbourn2008strength} and part of \cite[Lemma 3.1]{colbourn2010covering}, and indeed this is a fact that holds for the more general case of variable strength covering arrays~\cite{raaphorst2018variable}.

\begin{proposition}\label{fusion} \cite[Theorem 1]{castoldi2019cai} 
(Fusion) Let $t,m,s, v$ be positive integers. Then
$$OCAN_{\lambda}(t,m,s,v)\leq
OCAN_{\lambda}(t,m,s,v+1)-2.$$
\end{proposition}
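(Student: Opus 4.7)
The plan is to combine the classical fusion trick (identify two alphabet symbols) with the removal of two rows made redundant by the fusion. I start with an $OCA_\lambda(N;t,m,s,v+1)$ array $A$ over $V' = \{0, 1, \ldots, v\}$, where $N = OCAN_\lambda(t,m,s,v+1)$, and aim to produce an $OCA_\lambda(N-2;t,m,s,v)$.

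First, I pick any two distinct rows $r_1, r_2$ of $A$, which exist because $N \geq \lambda(v+1)^t \geq 4$. For each column $c$ independently, I would choose a permutation $\pi_c$ of $V'$ so that $\pi_c(r_1(c)), \pi_c(r_2(c)) \in \{v-1, v\}$; this is always possible, because if $r_1(c) = r_2(c)$ one maps the common value into $\{v-1, v\}$, and if $r_1(c) \neq r_2(c)$ one maps these two distinct entries to the two distinct targets $v-1$ and $v$. Applying these per-column alphabet permutations to $A$ preserves the OCA property (they induce bijections on $(V')^t$ for each anti-ideal $J$ of size $t$), while placing both $r_1$ and $r_2$ inside $\{v-1, v\}^{ms}$.

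I would then fuse the symbols $v-1$ and $v$: replace every occurrence of $v$ by $v-1$ to obtain an $N \times ms$ array $A'$ over $V = \{0, \ldots, v-1\}$. The key bookkeeping is that, for any anti-ideal $J$ of size $t$ and any $\tau \in V^t$ with $k$ coordinates equal to $v-1$, the tuple $\tau$ has exactly $2^k$ preimages in $(V')^t$ under the fusion (each $v-1$ coordinate could have come from either $v-1$ or $v$ in $A$), so $\tau$ is covered at least $2^k \lambda$ times in $A'|_J$. In particular the rows $r_1$ and $r_2$ are now both the all-$(v-1)$ row.

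Finally, I delete $r_1$ and $r_2$. For every anti-ideal $J$ of size $t$, the restrictions $r_1|_J$ and $r_2|_J$ both equal the all-$(v-1)$ tuple, so only that one tuple loses coverage (by exactly $2$); its coverage in $A'|_J$ was at least $2^t \lambda$, and for $t \geq 2, \lambda \geq 1$ we have $2^t \lambda - 2 \geq 4\lambda - 2 \geq \lambda$. All other tuples retain their coverage $\geq \lambda$. Hence the resulting $(N-2) \times ms$ array is an $OCA_\lambda(N-2;t,m,s,v)$, giving the desired bound. The only conceptually delicate step is the simultaneous column-permutation argument that brings two rows into $\{v-1, v\}^{ms}$; everything else is a direct accounting of how fusion multiplies coverage multiplicities.
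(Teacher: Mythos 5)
Your proof is correct; the paper itself omits the argument for this proposition (deferring to the conference version \cite{castoldi2019cai}), but what you give is the standard fusion technique --- the same one behind the Colbourn lemma the paper cites as the $s=1$ case --- of normalizing two chosen rows into $\{v-1,v\}$ by per-column symbol permutations, fusing $v$ into $v-1$, and deleting those two rows. The accounting is sound: per-column alphabet permutations preserve the OCA property, every tuple with $k$ coordinates equal to $v-1$ acquires coverage multiplicity at least $2^k\lambda$ after fusion (its $2^k$ preimages are covered by disjoint row sets), and the only tuple that loses coverage upon deletion is the all-$(v-1)$ tuple, whose multiplicity $2^t\lambda\ge 4\lambda$ comfortably exceeds $\lambda+2$ for $t\ge 2$ and $\lambda\ge 1$.
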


Thus for $s\geq 3$, $v$ a prime power, and $2\leq m \leq v+1$, Proposition~\ref{fusion} and Eq.~(\ref{eq1}) give the following upper bound
\begin{equation} \label{corol3} 
OCAN(t,v+1,t,v-1)\leq v^{t}-2.
\end{equation}

We now show new recursive constructions for OCAs.
Chateauneuf and Kreher \cite[Construction D]{Chateauneuf}  develop a form of alphabet augmentation for $s=1$ and strength 3, which yields
\[
CAN(3,n,v)\leq CAN(3,n,v-1)+n\cdot CAN(2,n-1,v-1)+n\cdot (v-1).
\]
Inspired by their construction, we show a form of \emph{alphabet augmentation} for ordered covering arrays for $s=3$ and strength 3.

\begin{theorem}\label{augs2} 
For $m\geq 3$ and $v\geq 3$, $OCAN(3,m,3,v)$ is less than or equal to
$$OCAN(3,m,2,v-1)+mCAN(2,m-1,v-1)+CAN(2,m,v-1)+m(v-1)^2+1.$$ 
\end{theorem}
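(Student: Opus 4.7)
The approach I would take is an alphabet augmentation in the style of Chateauneuf and Kreher~\cite{Chateauneuf}, adapted from covering arrays to the NRT setting. Write $w=v-1$ and $V' = \{0,1,\ldots,w-1\}$, so the goal is to extend a $V'$-alphabet structure to the full alphabet $V = V'\cup\{w\}$. The size-$3$ anti-ideals of the NRT poset $[m\cdot 3]$ fall into three structural types: (a) a full chain $\{b_{i,1},b_{i,2},b_{i,3}\}$; (b) a ``top-2 plus top-1'' triple $\{b_{i,2},b_{i,3},b_{j,3}\}$ with $i\neq j$; and (c) three top elements $\{b_{i,3},b_{j,3},b_{k,3}\}$ with distinct indices. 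On each such column-triple every element of $V^{3}$ must appear in some row.

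I would build the OCA as the union of five strata whose sizes match the five summands of the bound. Stratum~0 embeds an $OCA(N_{0};3,m,2,v-1)$ over $V'$ by interpreting its columns as the pairs $(b_{i,2},b_{i,3})_{i=1}^{m}$ and filling each column $b_{i,1}$ with a single constant value; since the size-$3$ anti-ideals of $[m\cdot 2]$ correspond precisely to types (b) and (c) of $[m\cdot 3]$ under the embedding $b_{i,1}^{[m\cdot 2]}\mapsto b_{i,2}^{[m\cdot 3]}$, $b_{i,2}^{[m\cdot 2]}\mapsto b_{i,3}^{[m\cdot 3]}$, this stratum already covers the $V'^{3}$ part of types (b) and (c) and a $V'^{2}$ slice of every type (a). Stratum~1 consists of $m$ blocks, one per chain index $i$, each of $N_{1}=CAN(2,m-1,v-1)$ rows; the $i$-th block uses a $CA$ over $V'$ on $m-1$ columns indexed by $\{1,\ldots,m\}\setminus\{i\}$ to fill $\{b_{j,3}: j\neq i\}$, forces $b_{i,3}=w$, and fills the remaining $b_{*,1}$ and $b_{*,2}$ columns with $w$. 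Stratum~2 uses one $CA(N_{2};2,m,v-1)$ to fill the middle columns $\{b_{i,2}\}$ while filling top and bottom columns with appropriate fixed values, addressing patterns with the new symbol in a middle position. Stratum~3 consists of exactly $m(v-1)^{2}$ ``bottom-column'' rows, one for each $(i,y,z)\in\{1,\ldots,m\}\times V'\times V'$, designed to inject the type-(a) tuples of chain $i$ whose first entry lies in $V'$. Stratum~4 is the single all-$w$ row. The total row count is $N_{0}+mN_{1}+N_{2}+m(v-1)^{2}+1$, exactly matching the stated bound.

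Verification of coverage proceeds by case analysis on the anti-ideal type and on which coordinates of the target triple equal the new symbol $w$. Stratum~0 handles the $V'^{3}$ portion of types (b) and (c); Stratum~1 handles all patterns with a single $w$ in a top position on any of the three types, by choosing the block index equal to the chain that owns the $w$-coordinate; Stratum~2 handles the middle-column $w$ patterns; Stratum~3 handles type-(a) triples with a non-$w$ bottom coordinate; and Stratum~4 handles $(w,w,w)$ for every type. The main obstacle lies not in identifying the strata but in carefully selecting the filler values within each stratum: every row of Stratum~1 for index $i$ simultaneously affects type (a) of chains $j\neq i$ and type (c) triples entirely outside chain $i$, so its fillers must be chosen so that no combination slips through, and the interaction between Strata~2 and~3 must leave no triple $(x_{1},x_{2},x_{3})$ with $x_{1}\in V'$ uncovered on any type-(a) anti-ideal. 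A secondary point is that $CAs$ of strength $2$ suffice for the top-column coverage inside Stratum~1, because among the three columns of a type-(c) anti-ideal one equals $w$ and the other two are governed by a strength-$2$ $CA$, so every pair from $V'^{2}$ appears. Once the fillers are tuned, the verification reduces to a routine pattern check over the $3\cdot 2^{3}$ sub-cases.
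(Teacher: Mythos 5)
Your proposal has a genuine gap, and it stems from skipping the paper's first move. The paper does not attack the poset $[m\cdot 3]$ directly: it invokes Proposition~\ref{prop8}(1) to reduce the problem to building an $OCA(N;3,m,2,v)$, so the full-chain anti-ideals $\{b_{i,1},b_{i,2},b_{i,3}\}$ of $[m\cdot 3]$ never have to be covered explicitly --- only the two anti-ideal types of $[m\cdot 2]$ (written $xy|z$ and $x|y|z$ in the paper) remain. You instead work in $[m\cdot 3]$ and must place all $v^{3}$ triples on each full chain, and your row allocation cannot do this. Your Stratum~3 reserves only $(v-1)^{2}$ rows per chain yet is claimed to handle the $(v-1)v^{2}$ type-(a) triples whose bottom coordinate lies in $V'$; the other strata contribute only thin slices to a fixed chain (Stratum~0 pins the bottom coordinate to one constant, Stratum~1 pins both bottom and middle to $w$, Stratum~2 pins top and bottom to fixed values), so even counting generously you fall far short of $v^{3}$ triples per chain, e.g.\ well under $27$ when $v=3$. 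No choice of ``filler values'' can repair this; the deficiency is in the row budget, not the tuning.

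There are also two internal inconsistencies in the case analysis. Stratum~1 fills every $b_{*,1}$ and $b_{*,2}$ column with $w$, so on anti-ideals of types (a) and (b) it only ever produces tuples whose non-top coordinates all equal $w$; it therefore cannot cover ``all patterns with a single $w$ in a top position'' on those types. Stratum~2 fills the middle columns with a $CA$ over $V'$, so the symbol $w$ never appears there and the ``middle-column $w$ patterns'' it is supposed to handle are exactly the ones it cannot produce. For comparison, the paper's corresponding ingredients are used quite differently: the $m$ copies of the strength-2 $CA$ on $m-1$ columns are interleaved with an all-zero block placed in each of the $m$ positions (covering the patterns $x|y|0$, $xy|0$, $x|00$); the single $CA(2,m,v-1)$ supplies the top entry of each block with the bottom entry forced to the new symbol (covering $0x|y$); and the $m(v-1)^{2}+1$ rows form a dedicated $D/E$ gadget whose cartesian structure covers the remaining mixed patterns $x0|y$, $x|0|0$, $x0|0$, $0x|0$ together with the all-new-symbol tuples. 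To fix your argument you would need to first prove (or cite) the $s=2\Rightarrow s=3$ lift and then redistribute the roles of Strata~1--3 along these lines.
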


\begin{proof}
Let $A$ be an $OCA(M;3,m,2,v-1)$, $B$ a $CA(M';2,m-1,v-1)$ and $C$ a $CA(M'';2,m,v-1)$ over the alphabet $\{1,\ldots,v-1\}$. Let $N=M+m M'+M'' +m(v-1)^2+1$. It is sufficient to construct an  $OCA(N;3,m,2,v)$ according to Proposition \ref{prop8} item $(1)$.
For this purpose, the proof is divided into two steps. We first construct an array $\mathcal{B}$ from $B$, an array $\mathcal{C}$ from $C$, and an new array $\mathcal{D}$. Second, we prove that an array $\mathcal{A}$ constructed by vertically juxtaposing the arrays $A$, $\mathcal{B}$, $\mathcal{C}$ and $\mathcal{D}$ is an $OCA(N;3,m,2,v)$.

\emph{Step 1:}  We can construct an $OCA(M';2, m-1,2,v-1)$ from the covering array $B$ as shown in Eq.~(\ref{thm6}).
Let $B^{1},\ldots,B^{m-1}$ be the subarrays of two consecutive columns of $OCA(M';2, m-1,2,v-1)$ such that
$OCA(M';2, m-1,2,v-1)=[B^{1} \ B^{2} \ \ldots \ B^{m-1}]$. We consider the following $(mM')\times 2m$ array $\mathcal{B}$ constructed by inserting  $M'\times 2$ zero arrays in every possible position $l$, $l=1,\ldots,m$, with respect to sequence of arrays  $B^{1},\ldots,B^{m-1}$,  as shown below:
\[
\mathcal{B}=\left[\begin{array}{ccccccc}
                       0 & B^{1} & B^{2} & \cdots & B^{m-3} & B^{m-2} & B^{m-1} \\
                       B^{1} & 0 & B^{2} & \cdots & B^{m-3} & B^{m-2} & B^{m-1} \\
                       \vdots & \vdots & \vdots &  & \vdots & \vdots \\
                       B^{1} & B^{2} & B^{3} & \cdots & B^{m-2} & 0 & B^{m-1} \\
                       B^{1} & B^{2} & B^{3} & \cdots & B^{m-2} & B^{m-1} & 0
                     \end{array}
                     \right].
\]

For $i\in [m]$, let $c_{i}$ be the column $i$ of the covering array $C$. From the covering array $C=[c_{1} \ c_{2} \ \ldots \ c_{m}]$,
we construct the following array:
\[
\mathcal{C}=\left[\begin{array}{cc|cc|cc|c|cc}
                       0 & c_{1} & 0 & c_{2} & 0 & c_{3} & \cdots & 0 & c_{m}
                     \end{array}
                     \right].
\]

Consider the following two arrays $D$ and $E$ of order $(v-1)^2 \times 2$ over the alphabet $\{0,1,\ldots,v-1\}$:
\begin{center}
\begin{minipage}{0.3\textwidth}
\[
D=\left[\begin{array}{cc}
                       0 & 1 \\
                       \vdots & \vdots \\
                       0 & 1 \\
                       \vdots & \vdots \\
                       0 & v-1 \\
                       \vdots & \vdots \\
                       0 & v-1
                     \end{array}
                     \right]
\]
\end{minipage}
\begin{minipage}{0.3\textwidth}
\[
E=\left[\begin{array}{cc}
                       1 & 0 \\
                       \vdots & \vdots \\
                       v-1 & 0 \\
                       \vdots & \vdots \\
                       1 & 0 \\
                       \vdots & \vdots \\
                       v-1 & 0
                     \end{array}
                     \right].
\]
\end{minipage}
\end{center}
Observe that each nonzero element of $\{1,\ldots,v-1\}$ appears exactly $v-1$ times in the nonzero column of $D$ and $E$. We define the array $\mathcal{D}$ of order $(m(v-1)^2+1) \times 2m$ as:
\[
\mathcal{D}=\left[\begin{array}{c|c|c|c|c|c}
                       D & E & E & \cdots & E & E \\
                       E & D & E & \cdots & E & E \\
                       E & E & D & \cdots & E & E \\
                       \vdots & \vdots & \vdots &  & \vdots & \vdots \\
                       E & E & E & \cdots & D & E \\
                       E & E & E & \cdots & E & D \\
                       0 \ 0 & 0 \ 0 & 0 \ 0 & \cdots & 0 \ 0 & 0 \ 0
                     \end{array}
                     \right].
                     \]

\emph{Step 2:} Let $N=M+m M'+M'' +m(v-1)^2+1$. Let $\mathcal{A}$ be the array formed by vertically juxtaposing $A$, $\mathcal{B}$, $\mathcal{C}$, and $\mathcal{D}$. It remains to prove that $\mathcal{A}$ is an $OCA(N;3,m,2,v)$.

Let $B_{i}=\{2i+1,2i+2\}$ be the blocks of the NRT poset $[m\cdot 2]$, for $i=0,\ldots, m-1$. An anti-ideal of size 3 can be formed by choosing the maximal element of three distinct blocks or by choosing the two elements of one block and the maximal element of another block. Let $x$, $y$, and $z$ be nonzero elements of the alphabet $\{0,1,\ldots,v-1\}$.
We can represent the two types of anti-ideals of size 3 of the NRT poset $[m\cdot 2]$ by $x$\textbar $y$\textbar $z$ and $xy$\textbar $z$ (or $x$\textbar $yz$).
The patterns of the $3$-tuples over $\{0,1,\ldots,v-1\}$ considering the two types of the anti-ideals of size 3 and whether 0 is a component of the $3$-tuple are in Table \ref{tab1}.

\begin{table}[h!]
\centering
\caption{Patterns of the 3-tuples for $x,y,z \not=0$}
\label{tab1}
\begin{tabular}{c|cccccc}
  part I & $x$\textbar$y$\textbar $z$ & $xy$\textbar $z$  &  &  & & \\ \hline
  part II & $x$\textbar $y$\textbar 0 & $xy$\textbar 0 & $x$\textbar00 &  & &  \\ \hline
  part III & 0$x$\textbar $y$ &  &  &  & & \\ \hline
  part IV & $x$0\textbar $y$ & $x$\textbar0\textbar0 & $x$0\textbar0 & 0$x$\textbar0 & 0\textbar0\textbar0 & 0\textbar00 \\
\end{tabular}
\end{table}

The patterns in part I are covered by $A$, the patterns in part II are covered by $\mathcal{B}$, the pattern in part III is covered by $\mathcal{C}$,
and the patterns in part IV are covered by $\mathcal{D}$.
Therefore, the array $\mathcal{A}$ is an $OCA(N;3,m,s,v)$, where  $N=M+m M'+M'' +m(v-1)^2+1$.
\end{proof}

We generalize below the derived array bound:
\[
CAN(t-1,n,v)\leq \frac{CAN(t,n+1,v)}{v}
\]
established in \cite{Chateauneuf}.
Indeed,  we construct an $OCA(M;t,m,s,v)$ such that $M\leq \lfloor \frac{N}{v}\rfloor$ by  selecting  rows of an $OCA(N;t+1,m+1,s,v)$ and deleting $s$ columns  labeled by one block of the NRT poset $[(m+1)\cdot s]$. We call this process a \emph{derivation on the number of blocks}.

\begin{proposition}\label{derionm}
$OCAN(t,m,s,v)\leq \left\lfloor \dfrac{OCAN(t+1,m+1,s,v)}{v} \right\rfloor$.
\end{proposition}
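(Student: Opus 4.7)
The plan is to use a standard derivation-plus-pigeonhole argument, adapted to the NRT poset setting so that we strip off an entire block of the poset rather than a single column.

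Let $N = OCAN(t+1,m+1,s,v)$ and let $A$ be an $OCA(N;t+1,m+1,s,v)$ whose columns are labeled by the NRT poset $[(m+1)\cdot s]$ with blocks $B_0,\ldots,B_m$, where $B_m=\{ms+1,\ldots,(m+1)s\}$ and $(m+1)s$ is its maximal element. First I would apply pigeonhole to the column labeled $(m+1)s$: since its $N$ entries are drawn from an alphabet of size $v$, there exists some $a^{*}\in V$ such that the number $M$ of rows in which this column equals $a^{*}$ satisfies $M\leq \lfloor N/v\rfloor$.

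Next I would form an auxiliary array $A''$ by restricting $A$ to those $M$ rows and then deleting the $s$ columns labeled by the block $B_m$. The result is an $M\times ms$ array whose columns are naturally labeled by the NRT poset $[m\cdot s]$. The claim is that $A''$ is an $OCA(M;t,m,s,v)$, which combined with $M\leq \lfloor N/v\rfloor$ gives the bound.

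To verify the claim, let $J$ be any anti-ideal of size $t$ in the NRT poset $[m\cdot s]$. The key observation is that $J':=J\cup\{(m+1)s\}$ is an anti-ideal of size $t+1$ in the NRT poset $[(m+1)\cdot s]$: the element $(m+1)s$ is maximal in the larger poset, so upward-closure of $J$ is preserved when it is adjoined. Because $A$ is an $OCA(N;t+1,m+1,s,v)$, the $N\times(t+1)$ subarray of $A$ indexed by $J'$ covers every $(t+1)$-tuple; in particular, for every $t$-tuple $(x_1,\ldots,x_t)\in V^t$, the tuple $(x_1,\ldots,x_t,a^{*})$ appears as some row of this subarray. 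Since that row has $a^{*}$ in the column labeled $(m+1)s$, it survives the row restriction that defines $A''$, and since $J\subseteq\{1,\ldots,ms\}$ none of its $J$-coordinates lie in the deleted block $B_m$. Therefore the $M\times t$ subarray of $A''$ indexed by $J$ contains every $t$-tuple as a row, as required.

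The argument is essentially a bookkeeping exercise; the only subtlety worth flagging, and the step I would write most carefully, is the verification that $J\cup\{(m+1)s\}$ is an anti-ideal and that restricting to $J$-columns of $A''$ agrees with restricting to $J$-columns of the corresponding $J'$-subarray of $A$. Once this is observed, the conclusion $OCAN(t,m,s,v)\leq M\leq \lfloor OCAN(t+1,m+1,s,v)/v\rfloor$ is immediate.
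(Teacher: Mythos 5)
Your proof is correct and follows essentially the same route as the paper's: pigeonhole on the last (maximal) column of block $B_m$ to select rows with a least-frequent symbol, delete the block, and verify coverage via the observation that $J\cup\{(m+1)s\}$ is an anti-ideal of size $t+1$. No gaps.
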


\begin{proof}
Let $A$ be an $OCA(N;t+1,m+1,s,v)$.
For a fixed $\alpha \in \{0,1,\ldots,v-1\}$, let $A_{\alpha}$ be the array obtained by choosing the rows of $A$ such that each entry in the last column is equal to $\alpha$, and deleting the last $s$ columns. We choose $\alpha \in \{0,1,\ldots,v-1\}$ that occurs the least number of times in the last column of $A$, to ensure $A_{\alpha}$ has at most  $\lfloor \frac{N}{v} \rfloor$ rows. We claim that the array $A_{\alpha}$ is an $OCA(M;t,m,s,v)$ such that $M\leq \lfloor \frac{N}{v}\rfloor$.
Indeed, let $J$ be an anti-ideal of the NRT poset $[m \cdot s]$ of size $t$. The set $J'=J\cup \{(m+1)s\}$ is an anti-ideal of the NRT poset $[(m+1)\cdot s]$ of size $t+1$. The columns of $A$ labeled by $J'$ are covered. Now, looking at the rows such that the last entry is $\alpha$ in $A$, we have that the columns labeled by $J$ cover all the $t$-tuples over $\{0,1,\ldots,v-1\}$ at least once. Therefore,  $A_{\alpha}$ is an $OCA(M;t,m,s,v)$ such that $M\leq \lfloor \frac{N}{v}\rfloor$.
\end{proof}

In contrast to Proposition \ref{derionm}, we construct an $OCA(M;t,m,s,v)$ such that $M\leq \lfloor \frac{N}{v}\rfloor$ by  selecting  rows of $OCA(N;t+1,m,s+1,v)$ and by choosing  one column in each block of the NRT poset $[m\cdot (s+1)]$  to be deleted. We call this process a \emph{derivation on the size of the blocks}, which is a specific property of ordered covering arrays.

\begin{proposition}
$OCAN(t,m,s,v)\leq \left\lfloor \dfrac{OCAN(t+1,m,s+1,v)}{v} \right\rfloor$.
\end{proposition}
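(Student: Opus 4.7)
The plan is to mimic the derivation argument used in Proposition~\ref{derionm}, but instead of shrinking the number of blocks from $m+1$ to $m$, we shrink each block from size $s+1$ to size $s$. Since a single value-fixing divides the row count by $v$, we may fix only one column but we must delete one column from each of the $m$ blocks of the NRT poset $[m\cdot(s+1)]$. The delicate point will be choosing which column to fix and which columns to delete from the other blocks so that every anti-ideal of size $t$ in the resulting poset lifts to an anti-ideal of size $t+1$ in the original poset.

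Concretely, let $A$ be an $OCA(N;t+1,m,s+1,v)$ with $N = OCAN(t+1,m,s+1,v)$, and let $B_i=\{i(s+1)+1,\ldots,(i+1)(s+1)\}$ for $i=0,\ldots,m-1$ denote the blocks of $[m\cdot(s+1)]$. I would take $c=s+1$, the maximal element of $B_0$, as the column to be fixed, and then delete the minimal element $i(s+1)+1$ of every other block $B_i$, $i\geq 1$. By pigeonhole, choose $\alpha\in\{0,\ldots,v-1\}$ so that at most $\lfloor N/v\rfloor$ rows of $A$ have $\alpha$ in column $c$. Let $A'$ be the array obtained by restricting to those rows and then deleting column $c$ together with the columns $i(s+1)+1$ for $i\geq 1$. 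The remaining columns carry the poset with blocks $B_0'=\{1,\ldots,s\}$ and $B_i'=\{i(s+1)+2,\ldots,(i+1)(s+1)\}$, which is isomorphic to $[m\cdot s]$.

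To verify that $A'$ is an $OCA(M;t,m,s,v)$ with $M\leq\lfloor N/v\rfloor$, I would take an arbitrary anti-ideal $J'$ of size $t$ in this new poset and show that $J'\cup\{c\}$ is an anti-ideal of size $t+1$ of the original poset $[m\cdot(s+1)]$. Upper-closure splits naturally into cases: elements above $c=s+1$ do not exist (since $c$ is maximal), elements above any $j\in J'\cap B_0'$ in $[m\cdot(s+1)]$ are either in $B_0'\subseteq$ new poset or equal $c$, and elements above any $j\in J'\cap B_i'$ with $i\geq 1$ lie entirely inside $B_i'$ (the deleted minimal element $i(s+1)+1$ is below, not above, every element of $B_i'$). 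Coverage of $(t+1)$-tuples in columns $J'\cup\{c\}$ of $A$ therefore yields coverage of $t$-tuples in columns $J'$ of $A'$, completing the proof. The main obstacle is precisely this compatibility check, which dictates the asymmetric choice — a \emph{maximal} element to fix in the distinguished block and \emph{minimal} elements to discard from the others; any other pattern would force the lifted set $J'\cup\{c\}$ to fail upper-closure for some anti-ideals $J'$.
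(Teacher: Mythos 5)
Your proposal is correct and follows essentially the same derivation argument as the paper: fix the value in the column labeled by the maximal element of one distinguished block and delete the minimal elements of the remaining $m-1$ blocks, then lift each size-$t$ anti-ideal $J'$ of the resulting $[m\cdot s]$ poset to the size-$(t+1)$ anti-ideal $J'\cup\{c\}$. The only cosmetic difference is that the paper distinguishes the last block rather than the first.
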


\begin{proof}
Let $I=\cup_{i=0}^{m-2}\{i(s+1)+1\}$ be  the ideal of the NRT poset $[m\cdot (s+1)]$ formed by the $m-1$ minimal elements of the first $m-1$ blocks. Let $A$ be an $OCA(N;t+1,m,s+1,v)$. For a fixed $\beta \in \{0,1,\ldots,v-1\}$, consider the array $A_{\beta}$ obtained by choosing the rows of $A$ such that each entry in the last column is equal to $\beta$, and deleting the $m-1$ columns labeled by the minimal elements of the ideal $I$ as well as the last column (i.e.~the one labeled by the maximal element $m(s+1)$ of the last block).  Then select $\beta \in \{0,1,\ldots,v-1\}$ that occurs the least number of times in the last column of $A$, to ensure $A_{\beta}$ has at most  $\lfloor \frac{N}{v} \rfloor$ rows. We claim that the array $A_{\beta}$ is an $OCA(M;t,m,s,v)$ such that $M\leq \lfloor \frac{N}{v}\rfloor$.

\begin{figure}[t]
\centering
\includegraphics{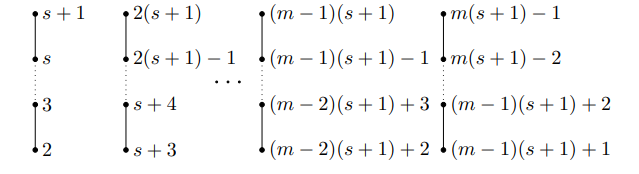}
\caption{$P\subset [m\cdot (s+1)]$ is an NRT poset $\mathcal{R}[m \cdot s]$.}
\label{fig1}
\end{figure} 

Let $P$ be the NRT poset $\mathcal{R}[m \cdot s]$ given in Fig.~\ref{fig1}. Let $J$ be an anti-ideal of $P$ of size $t$. The set $J'=J\cup \{m(s+1)\}$ is an anti-ideal of the NRT poset $[m\cdot (s+1)]$ of size $t+1$. The columns of $A$ labeled by $J'$ cover all the $(t+1)$-tuples over $\{0,1,\ldots,v-1\}$ at least once. Now, looking at the rows where the last entry is $\beta$ in $A$, the columns labeled by $J$ cover all the $t$-tuples over $\{0,1,\ldots,v-1\}$ at least once. Therefore, $A_{\beta}$ is an $OCA(M;t,m,s,v)$ such that $M\leq \lfloor \frac{N}{v}\rfloor$.
\end{proof}

Theorem~\ref{t1} below gives a construction for an $OCA(M;t,m+1,s,v)$ obtained by adding a block of $s$ columns to an $OCA(N;t,m,s,v)$ and additional rows, in the same spirit as some existing constructions of covering arrays which add a column.
However, in the case of ordered covering arrays this is a more complicated task. Before we prove Theorem~\ref{t1}, we need to construct an array and give a technical lemma showing the properties of this new array.

\begin{construction}\label{construction1}
Let $P$ be an NRT poset $\mathcal{R}[2\cdot s]$  with blocks $B_1=\{b_1, \ldots, b_s\}$ and $B_2=\{b_{s+1}, \ldots, b_{2s}\}$, where $b_1 \preceq b_2 \preceq\cdots \preceq b_s$ and $b_{s+1} \preceq b_{s+2}\cdots \preceq b_{2s}$.
Let $j\leq 2s$, and define an array $T^j$ with $2s$ columns labeled by $b_1,\ldots, b_{2s}$ in this order, and with rows specified as follows.
Each row of $T^j$ is indexed by each tuple $x=(x_1,\ldots,x_j)\in \{0,\ldots,v-1\}^j$ such that $(x_1,\ldots,x_{\lfloor \frac {j}{2} \rfloor}) \not= (x_j,x_{j-1},\ldots, x_{\lceil \frac {j}{2} \rceil+1})$, and
row $T^j_x=[a_1,a_2,\ldots,a_s,a_{s+1},a_{s+2},\ldots,a_{2s}]$ where:
\begin{enumerate}
\item $a_{s-i+1}=x_{i}$, if $1\leq i \leq min\{j,s\}$;
\item $a_{2s-i+1}=x_{j-i+1}$, if $1\leq i \leq min\{j,s\}$ and;
\item $a_1, \ldots, a_{s-min\{j,s\}}$ as well as $a_{s+1}, \ldots, a_{2s-min\{j,s\}}$ are set arbitrarily.
\end{enumerate}
\end{construction}

\begin{lemma} \label{tough}
The array $T^j$ given in Construction~\ref{construction1} has $(v^j-v^{\lceil \frac {j}{2} \rceil})$ rows.
Moreover, for every anti-ideal $J$ of $P$ with \textbar$J$\textbar $=j$, letting $j_1=$\textbar $B_1\cap J$\textbar and $j_2=$\textbar $B_2\cap J$\textbar and $\overline{t}=\min\{ j_1,j_2\}$, we have that the subarray of $T^j$ labeled by $J$ contains every tuple $(y,z)=(y_1, \ldots, y_{j_1}, z_1,  \ldots, z_{j_2})$ of $\{0,\ldots,v-1\}^j$ such that $(y_{j_1-\overline{t}+1}, y_{j_1-\overline{t}+2}, \ldots, y_{j_1})\not=(z_{j_2-\overline{t}+1}, z_{j_2-\overline{t}+2}, \ldots, z_{j_2})$ as a row.
\end{lemma}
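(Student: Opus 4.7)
The plan is: fix an anti-ideal $J$ of $P$ with $|J|=j$ and a target tuple $(y,z)=(y_1,\ldots,y_{j_1},z_1,\ldots,z_{j_2})$ satisfying the stated inequality, and produce a single row of $T^j$ whose entries in the columns of $J$ equal $(y,z)$. Because $P$ is the union of two disjoint chains, $J$ must consist of the top $j_1$ elements of $B_1$ together with the top $j_2$ elements of $B_2$; that is, $J\cap B_1=\{b_{s-j_1+1},\ldots,b_s\}$ and $J\cap B_2=\{b_{2s-j_2+1},\ldots,b_{2s}\}$. Reading off Construction~\ref{construction1}, the entries of row $T^j_x$ in these columns are
\[
(x_{j_1},x_{j_1-1},\ldots,x_1,\;x_{j_1+1},x_{j_1+2},\ldots,x_j).
\]
Demanding that this equal $(y_1,\ldots,y_{j_1},z_1,\ldots,z_{j_2})$ forces $x_i=y_{j_1-i+1}$ for $1\le i\le j_1$ and $x_{j_1+k}=z_k$ for $1\le k\le j_2$, which uniquely determines $x\in\{0,\ldots,v-1\}^j$.

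The row count is immediate: the palindromic equality $(x_1,\ldots,x_{\lfloor j/2\rfloor})=(x_j,\ldots,x_{\lceil j/2\rceil+1})$ pairs $\lfloor j/2\rfloor$ coordinates and leaves $\lceil j/2\rceil$ of them free (the middle coordinate when $j$ is odd), excluding $v^{\lceil j/2\rceil}$ tuples and yielding $v^j-v^{\lceil j/2\rceil}$ rows. The remaining substance of the lemma is to verify that the unique $x$ determined above actually indexes a row of $T^j$, i.e.\ that it satisfies $(x_1,\ldots,x_{\lfloor j/2\rfloor})\ne(x_j,\ldots,x_{\lceil j/2\rceil+1})$.

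For this verification I would split on the sign of $j_1-j_2$ and, by symmetry, focus on the case $j_1\le j_2$, so $\overline{t}=j_1$ and $j_1\le\lfloor j/2\rfloor$. The hypothesis produces some $k\in\{1,\ldots,j_1\}$ with $y_k\ne z_{j_2-j_1+k}$. Taking $i=j_1-k+1$ one obtains $i\in\{1,\ldots,j_1\}\subseteq\{1,\ldots,\lfloor j/2\rfloor\}$, and substituting into the solved system gives $x_i=y_k$ and $x_{j-i+1}=z_{j_2-j_1+k}$, which differ by assumption, so $x$ avoids the palindromic obstruction. The case $j_1>j_2$ proceeds identically after swapping the roles of the two blocks. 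The principal obstacle in this argument is the index bookkeeping: one must track which of $x_i$, $x_{j-i+1}$ is pinned down by a $y$-entry and which by a $z$-entry, and ensure in particular that $j-i+1$ falls in the $z$-region $\{j_1+1,\ldots,j\}$ rather than doubling back into the block-$B_1$ region. The inequality $\min\{j_1,j_2\}\le\lfloor j/2\rfloor$, an immediate consequence of $j_1+j_2=j$, is exactly what makes the chosen $i$ land in a range where the assumed disagreement between $y$ and $z$ transfers cleanly to a disagreement between $x_i$ and $x_{j-i+1}$; the hypothesis on tail-disagreement is thus precisely calibrated to the palindromic constraint defining $T^j$.
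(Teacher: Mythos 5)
Your proof is correct and follows essentially the same route as the paper's: identify the anti-ideal as right-justified in each block, read off which $x$-coordinates land in those columns, and translate the tail-disagreement hypothesis into a violation of the palindromic exclusion. The paper's own proof is considerably terser (it simply asserts that distinct coordinates of $x$ are assigned to the positions of $J$ and that non-suffix pairs therefore appear); your version supplies the explicit index bookkeeping, including the key observation that $\min\{j_1,j_2\}\le\lfloor j/2\rfloor$, which the paper leaves implicit.
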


\begin{proof}
A tuple  $(z_{1},\ldots,z_{n})$ is palindromic if $(z_{1},\ldots,z_{n})=(z_{n},\ldots,z_{1})$.
The array $T^j$ has one row per $x=(x_1,\ldots,x_j)\in \{0,\ldots,v-1\}^j$, such that row $T^j_x$ is not a palindromic tuple. There are $v^{\lceil \frac {j}{2} \rceil}$ palindromic tuples, so the total number of rows in $T^j$ is $(v^j-v^{\lceil \frac {j}{2} \rceil})$.
An anti-ideal $J$ is always formed by ``right-justified" subsets of block elements. Then  $J=\{b_{s-(j_1-1)}, \ldots, b_s\} \cup \{b_{2s-(j_2-1)}, \ldots, b_{2s}\}$, where $j_1+j_2=$\textbar$J$\textbar.
By construction, for row $T^j_x$, we assign different elements from the set $\{x_1,x_2,\ldots,x_j\}$ to these positions. Thus every $j$-tuple of the form $(y_1,\ldots,y_{j_1}, z_1, \ldots, z_{j_2})$ where $y=(y_1,\ldots,y_{j_1})$ is not a suffix of $z=(z_1, \ldots,z_{j_2})$ nor $z$ is a suffix of $y$ appear in these positions in some row of $T^j$.
\end{proof}

We illustrate the construction from Lemma~\ref{tough} in the following example.

\begin{example}
Take $v=3$, $j=3$, and the NRT poset $[2\cdot 5]$ in Lemma~\ref{tough}. We obtain array $T^3$ as below, where $\ast$ denotes positions of the array filled arbitrarily. The anti-ideals of cardinality $j=3$ for the set of columns $\{1,\ldots,10\}$ of $T^3$  are
$\{3,4,5\}$, $\{4,5,10\}$, $\{5,9,10\}$, and $\{8,9,10\}$.

Take $v=2$, $j=4$, and the NRT poset $[2\cdot 2]$ in Lemma~\ref{tough}.  We obtain array $T^{4}$ as below. There is only one  anti-ideal of cardinality $j=4$ corresponding to columns of $T^4$, namely  $\{1,2,3,4\}$.

\begin{minipage}{0.6\textwidth}
\[
T^3=\left[\begin{array}{ccccc|ccccc}
\ast & \ast & 0 & 0 & 1 & \ast & \ast & 1 & 0 & 0 \\
\ast & \ast & 0 & 0 & 2 & \ast & \ast & 2 & 0 & 0 \\
\ast & \ast & 0 & 1 & 1 & \ast & \ast & 1 & 1 & 0 \\
\ast & \ast & 0 & 1 & 2 & \ast & \ast & 2 & 1 & 0 \\
\ast & \ast & 0 & 2 & 1 & \ast & \ast & 1 & 2 & 0 \\
\ast & \ast & 0 & 2 & 2 & \ast & \ast & 2 & 2 & 0 \\
\ast & \ast & 1 & 0 & 0 & \ast & \ast & 0 & 0 & 1 \\
\ast & \ast & 1 & 0 & 2 & \ast & \ast & 2 & 0 & 1 \\
\ast & \ast & 1 & 1 & 0 & \ast & \ast & 0 & 1 & 1 \\
\ast & \ast & 1 & 1 & 2 & \ast & \ast & 2 & 1 & 1 \\
\ast & \ast & 1 & 2 & 0 & \ast & \ast & 0 & 2 & 1 \\
\ast & \ast & 1 & 2 & 2 & \ast & \ast & 2 & 2 & 1 \\
\ast & \ast & 2 & 0 & 0 & \ast & \ast & 0 & 0 & 2 \\
\ast & \ast & 2 & 0 & 1 & \ast & \ast & 1 & 0 & 2 \\
\ast & \ast & 2 & 1 & 0 & \ast & \ast & 0 & 1 & 2 \\
\ast & \ast & 2 & 1 & 1 & \ast & \ast & 1 & 1 & 2 \\
\ast & \ast & 2 & 2 & 0 & \ast & \ast & 0 & 2 & 2 \\
\ast & \ast & 2 & 2 & 1 & \ast & \ast & 1 & 2 & 2\\
\end{array}
\right]
\]
\end{minipage}
\begin{minipage}{0.3\textwidth}
\[
T^{4}=\left[
\begin{array}{cc|cc}
0 & 0 & 0 & 1\\
0 & 0 & 1 & 0\\
0 & 0 & 1 & 1\\
0 & 1 & 0 & 0\\
0 & 1 & 1 & 0\\
0 & 1 & 1 & 1\\
1 & 0 & 0 & 0\\
1 & 0 & 0 & 1\\
1 & 0 & 1 & 1\\
1 & 1 & 0 & 0\\
1 & 1 & 0 & 1\\
1 & 1 & 1 & 0\\
\end{array}
\right]
\]
\end{minipage}

For each of these arrays we can verify  that the subarray labeled by each anti-ideal visits every tuple in the alphabet, except for those tuples where one of the two parts is a suffix of the other part.
For example, columns $\{4,5,10\}$ does not contain tuple $(01$\textbar $1)$ as a row, since 1 is a suffix of 01, but contains tuple $(01$\textbar$0)$, since 0 is not a suffix of 01 and 01 is not a suffix of 0.
\end{example}

We are now ready to prove Theorem~\ref{t1}, that builds an $OCA(M;t,m+1,s,v)$ obtained from an $OCA(N;t,m,s,v)$ by adding a block of $s$ columns  and  additional rows using cartesian product.

\begin{theorem}\label{t1}
For $s\leq t$ and $k=\min \{2s,t\}$, $OCAN(t,m+1,s,v)$ is less than or equal to
\[
OCAN(t,m,s,v)+\sum_{j=2}^{k} OCAN(t-j,m-1,s,v)\cdot (v^{j}-v^{\lceil\frac{j}{2}\rceil}).
\]
\end{theorem}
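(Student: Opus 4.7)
The plan is to construct an $OCA(M;t,m+1,s,v)$ of the claimed size by vertically stacking a ``mirrored extension'' of an optimal base OCA with cartesian-product arrays indexed by $j=2,\ldots,k$. First, I take an optimal $A = OCA(OCAN(t,m,s,v);t,m,s,v)$ with blocks $B_1,\ldots,B_m$ and form $A'$ by appending $s$ new columns as a new block $B_{m+1}$ in which each column is a verbatim copy of the corresponding column of $B_m$ (the $i$th rightmost column of $B_{m+1}$ mirrors the $i$th rightmost column of $B_m$). This adds $s$ columns without any new rows. Next, for each $j \in \{2,\ldots,k\}$, I let $D_j$ be an optimal $OCA(OCAN(t-j,m-1,s,v); t-j,m-1,s,v)$ on blocks $B_1,\ldots,B_{m-1}$, and form $C_j = D_j \times T^j$ as the cartesian product with $T^j$ from Construction~\ref{construction1}, whose $2s$ columns I assign to blocks $B_m$ and $B_{m+1}$. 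By Lemma~\ref{tough}, $C_j$ contributes $OCAN(t-j,m-1,s,v)\cdot(v^j-v^{\lceil j/2\rceil})$ rows. Stacking $A'$ on top of $C_2,\ldots,C_k$ yields an array $A^*$ with exactly the claimed row count.

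To verify that $A^*$ is an $OCA$, I do a case analysis on any anti-ideal $J$ of $[(m+1)\cdot s]$ of size $t$, parametrized by its characteristic vector $(j_1,\ldots,j_{m+1})$ with $\sum j_i = t$. If $j_{m+1}=0$, then $J$ is already an anti-ideal of $[m\cdot s]$ of size $t$, covered by the base part of $A'$. If $j_m=0$ and $j_{m+1}\ge 1$, the mirroring forces the top $j_{m+1}$ columns of $B_{m+1}$ to carry the same values as the top $j_{m+1}$ columns of $B_m$, so reading $J$ in $A'$ is equivalent to reading the size-$t$ anti-ideal of $[m\cdot s]$ obtained by swapping $B_{m+1}$'s contribution for the top $j_{m+1}$ of $B_m$; this is covered by the base. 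The substantive case is $j_m,j_{m+1}\ge 1$: set $j=j_m+j_{m+1}\ge 2$ and $\overline{t}=\min\{j_m,j_{m+1}\}$. The mirroring in $A'$ can produce only tuples on $J$'s columns whose $B_m$- and $B_{m+1}$-parts agree on their last $\overline{t}$ entries (the ``matching $\overline{t}$-suffix'' tuples), and I would show that it actually produces all $v^{t-\overline{t}}$ of them. Simultaneously, Lemma~\ref{tough} shows the $T^j$ factor in $C_j$ covers the non-matching $\overline{t}$-suffix portion of the size-$j$ anti-ideal on $B_m \cup B_{m+1}$, and combining this with the strength-$(t-j)$ coverage of $D_j$ covers exactly the remaining $v^t - v^{t-\overline{t}}$ non-matching $t$-tuples. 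The two contributions together account for all $v^t$ tuples.

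The hard part will be making the matching-suffix argument rigorous in the third case. Specifically, the tuples produced by the mirroring in $A'$ on $J$'s columns are determined by the row's values on the sub-anti-ideal $J^* = (J\cap(B_1\cup\cdots\cup B_{m-1}))\cup(\text{top }\max\{j_m,j_{m+1}\}\text{ of }B_m)$ of $[m\cdot s]$, which has size $t-\overline{t}<t$. Its coverage by the strength-$t$ base $A$ must come via extension to some size-$t$ anti-ideal of $[m\cdot s]$, which exists precisely because $ms\ge t$ leaves enough room (a counting argument tracks the available slack across blocks). I would then verify by direct count that this yields exactly the $v^{t-\overline{t}}$ matching tuples, and that Lemma~\ref{tough}'s guaranteed $v^j - v^{j-\overline{t}}$ non-forbidden $j$-tuples combined with the $v^{t-j}$ $(t-j)$-tuples from $D_j$ yield exactly $v^t - v^{t-\overline{t}}$ non-matching tuples, so the union exhausts all of $v^t$ and closes the verification.
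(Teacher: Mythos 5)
Your proposal matches the paper's proof essentially step for step: the same extension $A'$ obtained by duplicating the last block, the same cartesian products $OCA(t-j,m-1,s,v)\times T^j$ for $j=2,\dots,k$, and the same three-way case analysis splitting tuples on $B_m\cup B_{m+1}$ into matching-suffix tuples (handled by the duplication in $A'$) and non-matching ones (handled via Lemma~\ref{tough}). The only difference is presentational: you make explicit the extension of the size-$(t-\overline{t})$ sub-anti-ideal to a size-$t$ anti-ideal and close with an exact count, points the paper treats implicitly.
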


\begin{proof}
Let $B_{0}, B_{1}, \ldots,B_{m-1}, B_{m}$ be the blocks of the NRT poset $[(m+1)\cdot s]$.
Let $A$ be an $OCA(N;t,m,s,v)$ over $\{0,1,\ldots, v-1\}$. Let us consider the array $A'$ formed by concatenating $A$ with the subarray of $A$  formed by the last $s$ columns of $A$.  We add rows of other arrays below $A'$ as described next.
Let $k=\min \{2s,t\}$. For each  $j \in \{2,\ldots, k\}$, we build $C^j$ which is the cartesian product of rows of an \linebreak $OCA(N_{j};t-j,m-1,s,v)$ with rows of the array $T^j$ constructed in Lemma~\ref{tough}. We note that if $j=t$, then the entries of an $OCA(1;0,m-1,s,v)$ can be set arbitrarily.
Each array $C^{j}$ has $N_{j}\cdot (v^{j}-v^{\lceil\frac{j}{2}\rceil})$ rows.

Let $\mathcal{A}$ be the array obtained by vertically juxtaposing $A'$, $C^{2}$, $\ldots$, $C^{k}$, and $M=N+\sum_{j=2}^{k}N_{j}\cdot (v^{j}-v^{\lceil\frac{j}{2}\rceil})$. We claim that $\mathcal{A}$ is an  $OCA(M;t,m+1,s,v)$.
Let $J$ be an anti-ideal of size $t$ of the NRT poset $[(m+1)\cdot s]$. We divide the proof into three cases.

\begin{enumerate}
\item If $J\cap B_{m-1}=\emptyset$ and $J\cap B_{m}=\emptyset$, then the $t$-tuples over $\{0,1,\ldots,v-1\}$ are covered by  the columns of $A$ labeled by $J$, which are present in $A'$.
\item If $J\cap B_{m-1}=\emptyset$ or $J\cap B_{m}=\emptyset$, then the $t$-tuples over $\{0,1,\ldots,v-1\}$ are covered by  the columns of $A'$ labeled by $J$ since the last $s$ columns of $A'$ contain all tuples in $A$ corresponding to block $B_{m-1}$ which together with other blocks already satisfied coverage in $A$.
\item If $J\cap B_{m-1}\neq \emptyset$ and $J\cap B_{m}\neq\emptyset$, then \textbar$J\cap (B_{m-1}\cup B_{m})$\textbar$=j$ for $j\in\{2,\ldots,k\}$, where $k=\min \{2s,t\}$. For each $j\in\{2,\ldots,k\}$, we show that the juxtaposition of arrays $A'$ and $C^{j}$ constructed above cover the $t$-tuples over $\{0,1,\ldots,v-1\}$ in the subarray corresponding to the columns $J$.
We regard each $t$-tuple over $\{0,1,\ldots,v-1\}$ as $(x,y_1,y_2)$, where $x$ is a $(t-j)$-tuple and $(y_1,y_2)$ is a $j$-tuple, with component of $x$ corresponding to $J\setminus(B_{m-1}\cup B_{m})$,  the components of $y_1$ corresponding to  $J\cap B_{m-1}$ and the components of $y_2$ corresponding to $J\cap B_{m}$.
If $y_1$ is a suffix of $y_2$, since $(x,y_2)$ is a tuple covered in $A$  for columns corresponding to $J\setminus (B_{m-1}\cup B_m) $ and $B_{m-1}$, by construction of $A'$ we have that $(x,y_1,y_2)$ is covered in $A'$ for columns corresponding to $J\setminus (B_{m-1}\cup B_m) $, $B_{m-1}$ and $B_m$. Similarly, if $y_2$ is a suffix of $y_1$, we get coverage in $A'$.
On the contrary, if $\overline{y_1}$ and $\overline{y_2}$ are suffixes of largest possible common size for $y_1$ and $y_2$ with the property that $\overline{y_1}\not=\overline{y_2}$, then $(y_1,y_2)$ must be covered in $T^j$. Since $x$ is covered in columns corresponding to $J\setminus (B_{m-1}\cup B_m) $ of the $OCA(N_{j};t-j,m-1,s,v)$, then $(x,y_1,y_2)$ is covered in columns  of $C^j$, which is the cartesian product of an $OCA(N_{j};t-j,m-1,s,v)$ with  $T^j$.
\end{enumerate}
Therefore, $\mathcal{A}$ is an $OCA(M;t,m+1,s,v)$, where $M=\displaystyle N+\sum_{j=2}^{k}N_{j}\cdot (v^{j}-v^{\lceil\frac{j}{2}\rceil})$.
\end{proof}

In the next example, we illustrate the construction given by Theorem \ref{t1}.
\begin{example}
 For $t=s=3$, $m=3$, and $v=2$,
\[
OCAN(3,4,3,2)\leq OCAN(3,3,3,2)+ OCAN(1,2,3,2)\cdot 2 + OCAN(0,2,3,2)\cdot 4.
\]
The $OCA(16;3,4,3,2)$, $\mathcal{A}$, constructed in Theorem \ref{t1} is shown below.
We explain each part of the array $\mathcal{A}$. Let $B_{0}=\{1,2,3\}, B_{1}=\{4,5,6\}, B_{2}=\{7,8,9\}, B_{3}=\{10,11,12\}$ be the blocks of the NRT poset $[4\cdot 3]$.
By Eq.~(\ref{eq1}),  $OCAN(3,3,3,2)=8$ and its corresponding array is given by Part I in the columns labeled by the blocks $B_{0}$, $B_{1}$ and $B_{2}$. In Part I, we have the array $A'$, and the columns labeled by the blocks $B_{2}$ and $B_{3}$ are repeated in $A'$. For $j\in\{2,3\}$, we build $T^{j}$ as in Lemma~\ref{tough}.
In Part II and III,  we have the array $C_{2}$ which is the cartesian product of an $OCA(2;1,2,3,2)=\left(\begin{array}{c|c}000&000\\111&111\\\end{array}\right)$ with
$T^{2}=\left(\begin{array}{c|c}*10& *01\\ *01&*10\\\end{array}\right)$.
In Part IV, we have the array $C_{3}$. The symbol $\ast$ in columns labeled by $B_{0}$ and $B_{1}$ can be set arbitrarily since an $OCA(N;0,2,3,2)$ does not need to have any special property. Finally, the columns labeled by $B_{2}$ and $B_{3}$ in Part IV give the array $T^{3}$.
\begin{center}
\begin{minipage}{0.1\textwidth}
\[
\begin{array}{r}
   \\
   \\
   \\
  \textrm{Part I} \\
   \\
   \\
   \\
   \\
   \textrm{Part II} \\
   \\
  \textrm{Part III} \\
   \\
   \\
  \textrm{Part IV}  \\
   \\
\end{array}
\]
\end{minipage}
\begin{minipage}{0.7\textwidth}
\centering
\[
\begin{array}{c}
\hspace{4mm}\begin{array}{cccc}
 B_{0} \ \ \  & \ \ \  B_{1} \ \ \  & \ \ \  B_{2} \ \ \  &  \ \ \ B_{3}   \ \ \ \   \\
\end{array}\\
\left[\begin{array}{ccc|ccc|ccc|ccc}
 1 & 0 & 0 & 1 & 1 & 1 & 0 & 1 & 1 & 0 & 1 & 1 \\
 0 & 0 & 1 & 0 & 1 & 1 & 1 & 0 & 0 & 1 & 0 & 0 \\
 0 & 1 & 1 & 1 & 0 & 1 & 1 & 0 & 1 & 1 & 0 & 1 \\
 1 & 1 & 1 & 0 & 1 & 0 & 1 & 1 & 0 & 1 & 1 & 0 \\
 1 & 1 & 0 & 0 & 0 & 1 & 0 & 1 & 0 & 0 & 1 & 0 \\
 1 & 0 & 1 & 1 & 0 & 0 & 1 & 1 & 1 & 1 & 1 & 1 \\
 0 & 1 & 0 & 1 & 1 & 0 & 0 & 0 & 1 & 0 & 0 & 1 \\
 0 & 0 & 0 & 0 & 0 & 0 & 0 & 0 & 0 & 0 & 0 & 0 \\ \hline
 0 & 0 & 0 & 0 & 0 & 0 & \ast & 1 & 0 & \ast & 0 & 1 \\
 0 & 0 & 0 & 0 & 0 & 0 & \ast & 0 & 1 & \ast & 1 & 0 \\ \hline
 1 & 1 & 1 & 1 & 1 & 1 & \ast & 1 & 0 & \ast & 0 & 1 \\
 1 & 1 & 1 & 1 & 1 & 1 & \ast & 0 & 1 & \ast & 1 & 0 \\ \hline
 \ast & \ast & \ast & \ast & \ast & \ast & 1 & 0 & 0 & 0 & 0 & 1 \\
 \ast & \ast & \ast & \ast & \ast & \ast & 1 & 1 & 0 & 0 & 1 & 1 \\
 \ast & \ast & \ast & \ast & \ast & \ast & 0 & 0 & 1 & 1 & 0 & 0 \\
 \ast & \ast & \ast & \ast & \ast & \ast & 0 & 1 & 1 & 1 & 1 & 0
\end{array}\right].
\end{array}
\]
\end{minipage}
\end{center}
\end{example}

As discussed before, for $v$ a prime power and $t\geq 3$, Castoldi et al. \cite[Theorem 3]{castoldi2017ordered} shows a construction for an $OOA(v^t;t,v+1,t,v)$, which yields Eq.~(\ref{eq1}). Using this result and Corollary \ref{thm6}  to create ingredients for Theorem~\ref{t1}, we establish an upper bound for $OCAN(t,v+2,t,v)$.

\begin{corollary}\label{corprimepower}
Let $v$ be a prime power.
\begin{enumerate}
\item[(1)] If $t$ is odd, then $OCAN(t,v+2,t,v)\leq v^t (t- \frac{2}{1-v} (\frac{1}{v^{(t-1)/2}} -1) )$.
\item[(2)] If $t$ is even, then $OCAN(t,v+2,t,v)\leq v^t (t-\frac{2}{1-v} (\frac{1}{v^{(t-2)/2}} -1)  - \frac{1}{v^{t/2}} )$.
\end{enumerate}
\end{corollary}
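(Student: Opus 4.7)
The plan is to apply Theorem~\ref{t1} with $m=v+1$ and $s=t$. Since $k=\min\{2t,t\}=t$ and the seed $OCAN(t,v+1,t,v)=v^{t}$ comes from Eq.~(\ref{eq1}), the theorem gives
\[
OCAN(t,v+2,t,v)\leq v^{t}+\sum_{j=2}^{t}OCAN(t-j,v,t,v)\,(v^{j}-v^{\lceil j/2\rceil}).
\]
The first task will be to evaluate each coefficient $OCAN(t-j,v,t,v)$. Because any anti-ideal of size $t-j$ sits inside the top $t-j$ positions of the chains, the extra chain length is inessential and $OCAN(t-j,v,t,v)=OCAN(t-j,v,t-j,v)$. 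I then invoke Eq.~(\ref{eq1}) when $t-j\geq 3$ (noting that $m=v\leq v+1$), Eq.~(\ref{thm6}) together with the existence of an $OA(2,v+1,v)$ when $t-j=2$, and a trivial argument when $t-j\in\{0,1\}$. All three cases produce $v^{t-j}$.

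Substituting and using $\lceil j/2\rceil-j=-\lfloor j/2\rfloor$, the bound rewrites as
\[
OCAN(t,v+2,t,v)\leq tv^{t}-\sum_{j=2}^{t}v^{t-\lfloor j/2\rfloor}.
\]
The final step is to evaluate this geometric-like sum, and here the parity of $t$ enters. For $j=2,\ldots,t$ the values $\lfloor j/2\rfloor$ run $1,1,2,2,\ldots$, so for odd $t$ every value from $1$ to $(t-1)/2$ is hit exactly twice, while for even $t$ the term $j=t$ is unpaired and contributes an extra $v^{t/2}$. Applying $\sum_{k=1}^{r}v^{-k}=(1-v^{-r})/(v-1)$ and factoring out $v^{t}$ then delivers items~(1) and~(2) respectively.

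The main obstacle is careful bookkeeping rather than any real difficulty. The boundary cases $t-j\in\{0,1,2\}$ need separate justification because Eq.~(\ref{eq1}) does not directly apply there, and the prime-power hypothesis must enter through either the classical $OA$ construction or trivial bounds. Beyond that, the only risks are an off-by-one when pairing the $\lfloor j/2\rfloor$ values and a sign slip when rewriting $(1-v^{-r})/(v-1)$ as $\tfrac{2}{1-v}(v^{-r}-1)$, neither of which is conceptually deep.
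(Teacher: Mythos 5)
Your proposal is correct and follows essentially the same route as the paper: apply Theorem~\ref{t1} with $m=v+1$, $s=t$, $k=t$, show $OCAN(t-j,v,t,v)=v^{t-j}$ for all $j\in\{2,\dots,t\}$ by reducing the chain length and invoking Eq.~(\ref{eq1}), Eq.~(\ref{thm6}) with a strength-2 orthogonal array for the $t-j=2$ case, and trivial values for $t-j\in\{0,1\}$, then evaluate $\sum_{j=2}^{t}v^{-\lfloor j/2\rfloor}$ by parity of $t$. The bookkeeping you describe (pairing the floor values, the unpaired $j=t$ term when $t$ is even, and the geometric sum rewritten with the factor $\tfrac{2}{1-v}$) matches the paper's computation exactly.
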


\begin{proof}
For $v$ a prime power and $t\geq 3$,   $OCAN(t,v+1,t,v)=v^{t}$ by Eq.~(\ref{eq1}). For $j \in \{2, \ldots,t-3\}$ and since $t-j\leq t$, Proposition \ref{prop8} item $(2)$ yields  $OCAN(t-j,v,t,v)=OCAN(t-j,v,t-j,v)$. By Eq.~(\ref{eq1}), we obtain $OCAN(t-j,v,t-j,v)=v^{t-j}$. For $j=t-2$, Corollary \ref{thm6} implies that $OCAN(2,v,t,v)=OCAN(2,v,2,v)=CAN(2,v,v)$. Bush's construction \cite[Theorem 3.1]{hedayat2012orthogonal} gives $CAN(2,v,v)=v^2$. For $j=t-1$, $OCAN(1,v,t,v)=v$, and for $j=t$, $OCAN(0,v,t,v)=1$. In summary, $OCAN(t-j,v,t,v)=v^{t-j}$, for all $j \in \{2, \ldots,t\}$.
Using Theorem \ref{t1}, we obtain
\begin{eqnarray*}
OCAN(t,v+2,t,v)& \leq & v^t + \sum_{j=2}^t v^{t-j} (v^j-v^{\lceil\frac{j}{2}\rceil} ) \\
& = & t v^t -\sum_{j=2}^t v^{t-\lfloor\frac{j}{2}\rfloor}\\
& = & v^t \left(t-\sum_{j=2}^t \frac{1}{v^{\lfloor\frac{j}{2}\rfloor}}\right).
\end{eqnarray*}

We now consider two cases.
\begin{enumerate}
\item If $t$ is odd, then 
\[
\sum_{j=2}^t \frac{1}{v^{\lfloor\frac{j}{2}\rfloor}}=2\cdot \sum_{j=1}^{(t-1)/2} \frac{1}{v^{j}}=\frac{2}{1-v}\left(\frac{1}{v^{(t-1)/2}} -1\right).
\] 
Therefore $OCAN(t,v+2,t,v)\leq v^t (t- \frac{2}{1-v} (\frac{1}{v^{(t-1)/2}} -1) )$.
\item If $t$ is even, then 
\[
\sum_{j=2}^t \frac{1}{v^{\lfloor\frac{j}{2}\rfloor}}=\sum_{j=2}^{t-1} \frac{1}{v^{\lfloor\frac{j}{2}\rfloor}}+\frac{1}{v^{t/2}}.
\] 
Since $t-1$ is odd, 
\[
\sum_{j=2}^{t-1} \frac{1}{v^{\lfloor\frac{j}{2}\rfloor}}=\frac{2}{1-v}\left(\frac{1}{v^{(t-2)/2}} -1\right).
\] 
Therefore $OCAN(t,v+2,t,v)\leq$  $v^t (t-\frac{2}{1-v} (\frac{1}{v^{(t-2)/2}} -1)  - \frac{1}{v^{t/2}} )$.
\end{enumerate}
\end{proof}

\section{New upper bounds on covering codes in NRT spaces}\label{sec5}

In this section, we improve the general upper bound on $K_{q}^{\mathcal{R}}(m,s,R)$ given in Proposition \ref{triviais} for suitable
values of $m$ and $R$ by constructing new covering codes. 
For this purpose, we start from the covering code used to prove Proposition \ref{triviais} and our strategy consists of 
modifying some of its codewords to reduce the size of the covering code. Table \ref{tab3}, at the end of this section, compares
the upper bounds obtained in this section with those in the tables in \cite{castoldi2015covering}.

In order to facilitate the readability of the arguments, we represent a vector $x=$  $(x_{1},\ldots,x_{ms})$ $\in \mathbb{Z}_{q}^{ms}$ as a matrix:
\[ x=\left[
\begin{array}{cccc}
  x_{s} & x_{2s} & \cdots & x_{ms} \\
  \vdots & \vdots &  & \vdots \\
  x_{2} & x_{s+2} & \cdots & x_{(m-1)s+2} \\
  x_{1} & x_{s+1} & \cdots & x_{(m-1)s+1}
\end{array}
\right].
\]

\subsection{New covering codes for $m$ odd}\label{sec51}

Let $k$ be a positive integer. We first focus on the case where $m\geq 3$ is odd, $m=2k+1$, and the radius is $(k+1)s-j$, for $j=1,\ldots,s$.

\begin{theorem}\label{bigcaseodd}
Let $q$, $s$ and $k$ be positive integers such that $q\geq 2$, $s\geq 3$ and $k\geq 1$. For $j=1,\ldots,s$,  $K_{q}^{\mathcal{R}}(2k+1,s,(k+1)s-j)\leq q^{ks+j}-q^{k(s-2)+j}(q^k-1)$.
\end{theorem}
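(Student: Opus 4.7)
The plan is to construct an explicit $R$-covering of $\mathbb{Z}_q^{(2k+1)s}$ of size $q^{ks+j}-q^{k(s-2)+j}(q^k-1)$ by modifying the natural covering used in the proof of Proposition~\ref{triviais}. With $R=(k+1)s-j$, the starting point is $C_0=\{c:c_i=0\text{ for all }i\in I\}$, where $I$ is the ideal of size $R$ formed by the first $k$ complete chains $C_1,\ldots,C_k$ together with the bottom $s-j$ positions of $C_{k+1}$; then $|C_0|=q^{ks+j}$. For each $x$, the codeword $c^{*}(x)\in C_0$ that agrees with $x$ on the free positions (the top $j$ positions of $C_{k+1}$ and all positions of $C_{k+2},\ldots,C_{2k+1}$) achieves
\[
d_{\mathcal{R}}(x,c^{*}(x))=\sum_{i=1}^{k}h_i(x)+h_{k+1}(x|_{\mathrm{bot}(s-j)})\leq R,
\]
where $h_i(z)$ denotes the height of the support of $z$ within chain $C_i$, so that $d_{\mathcal{R}}(x,y)=\sum_{i=1}^{2k+1}h_i(x-y)$.

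My strategy is to delete from $C_0$ the subset $B$ of codewords whose top entry is zero in every chain $C_{k+2},\ldots,C_{2k+1}$, which has size $|B|=q^{k(s-1)+j}$, and replace them by a carefully designed set $A$ of $q^{k(s-2)+j}$ new codewords with nontrivial structure in the first $k$ chains. The resulting set $C=(C_0\setminus B)\cup A$ then has the desired cardinality $q^{ks+j}-q^{k(s-1)+j}+q^{k(s-2)+j}=q^{ks+j}-q^{k(s-2)+j}(q^{k}-1)$. Verifying that $C$ is still an $R$-covering splits into cases: if $c^{*}(x)\notin B$ then $x$ is covered by $c^{*}(x)\in C_0\setminus B$; if $c^{*}(x)\in B$ (which forces $x_{\mathrm{top}(C_i)}=0$ for all $i\geq k+2$) but the bottom distance $\sum_{i=1}^{k}h_i(x)+h_{k+1}(x|_{\mathrm{bot}(s-j)})$ is at most $ks-j$, then some $c\in C_0\setminus B$ with exactly one nonzero top entry in chains $C_{k+2},\ldots,C_{2k+1}$ still lies within distance $R$, because the extra $s$ penalty from that chain fits into the slack $R-\text{(bottom)}\geq s$.

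The ``lost'' points, namely those with $x_{\mathrm{top}(C_i)}=0$ for all $i\geq k+2$ and bottom distance at least $ks-j+1$, must then be absorbed by $A$. This is where I expect the main obstacle to lie. I would parametrize $A$ by the $q^{j}$ choices for the top-$j$ entries of $C_{k+1}$ together with a family of $q^{k(s-2)}$ patterns for the bottom $s-2$ positions of $C_1,\ldots,C_k$, while hard-coding specific nonzero values in the top two entries of each $C_i$ ($i\leq k$). The idea is that, when $c'\in A$ matches the top two entries of $x$ in the first $k$ chains, the contributions $h_i(x-c')$ ($i\leq k$) drop by at least $2$ each, cancelling the large bottom distance and bringing the total below $R$. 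The technical heart of the proof will be the combinatorial case analysis on the $h_i$-profile of a lost $x$ showing that at least one $c'\in A$ is within distance $R$, for every possible way the lost mass is distributed among the chains $C_1,\ldots,C_{k+1}$.
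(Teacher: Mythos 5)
Your starting code $C_0$, the identification of the ``lost'' points, and the target cardinality are all consistent with the paper, but the architecture you choose --- keep $C_0\setminus B$ completely unmodified and absorb the lost set with a set $A$ of only $q^{k(s-2)+j}$ new codewords --- cannot be made to work. Two concrete problems. (a) You hard-code \emph{specific} nonzero values in the top two entries of each chain $C_i$, $i\leq k$, but a lost $x$ can have arbitrary nonzero top entries there; whenever $x$'s top entry of $C_i$ differs from the hard-coded value, $h_i(x-c')=s$ and no saving occurs, so already for $q\geq 3$ there are lost $x$ at distance $>R$ from every element of $A$. Even in the favourable case where the top two entries do match, your budget is $k(s-2)+(s-j)+k(s-1)=R+k(s-3)$, which exceeds $R$ for all $s>3$; one would need the first $k$ chains of $x$ to agree with $c'$ \emph{exactly}, which only $q^{k(s-2)}$ of the relevant $x$ can do. (b) More fundamentally, no choice of $A$ of size $q^{k(s-2)+j}$ can succeed: as you correctly observe, every codeword of $C_0\setminus B$ is at distance at least (bottom distance)$+\,s>R$ from a lost point, so $A$ must cover the lost set alone. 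But the lost set contains all $x$ with nonzero top entry in each of $C_1,\dots,C_k$ and zero top entries in $C_{k+2},\dots,C_{2k+1}$, which is $(q-1)^k q^{2k(s-1)+s}$ points, while each NRT ball of radius $R$ contains at most $(s+1)^{2k+1}q^{R}$ points; hence $|A|\cdot V_q^{\mathcal{R}}(2k+1,s,R)\leq (s+1)^{2k+1}q^{2ks+s-2k}$, which is smaller than the lost set once $(q-1)^k>(s+1)^{2k+1}$. So the approach is information-theoretically impossible for large $q$.

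The paper avoids this by not leaving the retained codewords intact. It partitions $C_0$ into classes $C_z$ of size $q^{2k}$ (fixing the top $j$ entries of block $k{+}1$ and the top $s-2$ entries of each of the last $k$ blocks), deletes only the $q^k-1$ codewords per class whose bottom row in the last $k$ blocks is nonzero while the second row is zero, and \emph{modifies every surviving codeword} by copying the bottom two entries of each of the last $k$ blocks into the top two entries of the corresponding first $k$ blocks. Since those copied pairs range over almost all of $\{0,\dots,q-1\}^{2k}$ within each class, every $x$ finds a modified codeword matching either its top rows in the first $k$ blocks or its bottom rows in the last $k$ blocks, which is exactly the matching freedom your hard-coded $A$ lacks. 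If you want to salvage your write-up, the deletion set and the added set should not be thought of as separate ingredients: the saving of $q^{k(s-2)+j}(q^k-1)$ codewords comes from a small per-class deletion compensated by a global row-copying modification.
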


\begin{proof}
The general upper bound for $K_{q}^{\mathcal{R}}(2k+1,s,(k+1)s-j)$ is $q^{ks+j}$, and
a $((k+1)s-j)$-covering $C$ of the NRT space $\mathbb{Z}_{q}^{(2k+1)s}$ of size $q^{ks+j}$ is formed by the codewords:
\[
c=\left[
\begin{array}{ccccccc}
  0 &  \cdots & 0 & \star & c_{(k+2)s} & \cdots & c_{(2k+1)s}\\
  0 &  \cdots & 0 & \star          & c_{(k+2)s-1} & \cdots & c_{(2k+1)s-1}\\
  \vdots &    & \vdots & \vdots & \vdots &  & \vdots \\
  0 &  \cdots & 0 & \star          & c_{(k+1)s+2} & \cdots & c_{2ks+2}\\
  0 &  \cdots & 0 & \star          & c_{(k+1)s+1} & \cdots & c_{2ks+1}
\end{array}
\right],
\]
where the column $\star$ uses the $s$-tuple $(0,\ldots,0,c_{(k+1)s-(j-1)},\ldots,c_{(k+1)s})$, filled from bottom to top, according to the proof of Proposition \ref{triviais}. We divide the proof into three steps. 

\emph{Step 1:} We partition the set $C$ into $q^{k(s-2)+j}$ parts indexed by the set $\mathbb{Z}_{q}^{k(s-2)+j}$.  
For each $z=({\bf z_{0}},z_{1},\ldots,z_{k(s-2)})\in \mathbb{Z}_{q}^{k(s-2)+j}$, where ${\bf z_{0}}$ is a $j$-tuple over $\mathbb{Z}_{q}$, let $C_{z}$ be the subset of $C$ formed by the codewords:
\[
c=\left[
\begin{array}{ccccccc}
  0 &  \cdots & 0 & \ast & z_{s-2} & \cdots & z_{k(s-2)}\\
  0 &  \cdots & 0 & \ast          & z_{s-3} & \cdots & z_{k(s-2)-1}\\
  \vdots &    & \vdots & \vdots & \vdots &  & \vdots \\
  0 &  \cdots & 0 & \ast & z_{1} & \cdots & z_{(k-1)(s-2)+1}\\
  0 &  \cdots & 0 & \ast & c_{(k+1)s+2} & \cdots & c_{2ks+2}\\
  0 &  \cdots & 0 & \ast & c_{(k+1)s+1} & \cdots & c_{2ks+1}
\end{array}
\right],
\]
where the column $\ast$ uses the $s$-tuple $(0,\ldots,0,{\bf z_{0}})$, filled from bottom to top.
For each $z\in \mathbb{Z}_{q}^{k(s-2)+j}$, write
\[
\mathcal{Z}_{z}=\mathbb{Z}_{q}^{ks}\times (\mathbb{Z}_{q}^{s-j}\times {\bf z_{0}}) \times \prod_{i=0}^{k-1} (\mathbb{Z}_{q}^{2}\times\{(z_{i(s-2)+1},\ldots,z_{i(s-2)+s-2})\}).
\]

The following properties hold:
\begin{itemize}
\item[(a)] $C_{z}\cap C_{z'}=\emptyset$ if and only if $z\neq z'$;
\item[(b)] \textbar$C_{z}$\textbar$=q^{2k}$ for all $z\in \mathbb{Z}_{q}^{k(s-2)+j}$;
\item[(c)] $\displaystyle C=\cup_{z\in \mathbb{Z}_{q}^{k(s-2)+j}} C_{z}$;
\item[(d)] $C_{z}$ is a $((k+1)s-j)$-covering of the  NRT space $\mathcal{Z}_{z}$  over the NRT poset $[(2k+1)\cdot s]$.
\end{itemize}
We observe that $(a)$, $(b)$ and $(c)$ tell us the set $\{C_{z}:z\in \mathbb{Z}_{q}^{k(s-2)+j}\}$ is a partition of $C$.

\emph{Step 2:} For each $z\in \mathbb{Z}_{q}^{k(s-2)+j}$, we construct a new set $A_{z}$ from $C_{z}$
such that $A_{z}$ is a $((k+1)s-j)$-covering of the NRT space $\mathcal{Z}_{z}$.
For each $c\in C_{z}$ such that   $(c_{(k+1)s+1},c_{(k+2)s+1},\ldots,c_{2ks+1})$ $=$ $(c_{(k+1)s+2},c_{(k+2)s+2},\ldots,c_{2ks+2})$ $= (0,\ldots,0)$ or $(c_{(k+1)s+2},c_{(k+2)s+2},\ldots,c_{2ks+2})\neq $ $(0,0,\ldots,0)$, define
\[
c'=\left[
\begin{array}{ccccccc}
  c_{(k+1)s+2} & \cdots & c_{2ks+2} & \ast & z_{s-2} & \cdots & z_{k(s-2)}\\
  c_{(k+1)s+1} & \cdots & c_{2ks+1} & \ast          & z_{s-3} & \cdots & z_{k(s-2)-1}\\
  0 & \cdots & 0 & \ast          & z_{s-4} & \cdots & z_{k(s-2)-2}\\
  \vdots &    & \vdots & \vdots & \vdots &  & \vdots \\
  0 &  \cdots & 0 & \ast & z_{1} & \cdots & z_{(k-1)(s-2)+1}\\
  0 &  \cdots & 0 & \ast & c_{(k+1)s+2} & \cdots & c_{2ks+2}\\
  0 &  \cdots & 0 & \ast & c_{(k+1)s+1} & \cdots & c_{2ks+1}
\end{array}
\right],
\]
where the column $\ast$ uses the $s$-tuple $(0,\ldots,0,{\bf z_{0}})$, filled from bottom to top.

Let $A_{z}$ be the set of codewords $c'$ defined above. The types of codewords in $C_{z}$ that we are not using to define $A_{z}$ are those such that $(c_{(k+1)s+1},c_{(k+2)s+1},\ldots,$ $c_{2ks+1})$ $\neq (0,0,\ldots,0)$ and $(c_{(k+1)s+2},c_{(k+2)s+2},\ldots,c_{2ks+2})= (0,0,\ldots,0)$. There are $q^k-1$ such codewords and $A_{z}$ has size $q^{2k}-(q^k-1)$.

\emph{Step 3:} It remains to show that the set $A_{z}$ is a $((k+1)s-j)$-covering of the NRT space $\mathcal{Z}_{z}$. The proof is divided into three cases.

Indeed, for $x \in \mathcal{Z}_{z}$, we know that $x$ and $c'\in A_{z}$ coincide in those $k(s-2)+j$ positions that are equal to $z$. We highlight in bold the $2k$ positions in each codeword that coincide with the respective positions in $x$.
\begin{enumerate}
\item If $(x_{(k+1)s+2},x_{(k+2)s+2},\ldots,x_{2ks+2})\neq (0,0,\ldots,0)$, then $x$ is covered by the following element of $A_{z}$:
\[
\left[
\begin{array}{ccccccc}
  x_{(k+1)s+2} & \cdots & x_{2ks+2} & \ast & z_{s-2} & \cdots & z_{k(s-2)}\\
  x_{(k+1)s+1} & \cdots & x_{2ks+1} & \ast          & z_{s-3} & \cdots & z_{k(s-2)-1}\\
  0 & \cdots & 0 & \ast          & z_{s-4} & \cdots & z_{k(s-2)-2}\\
  \vdots &    & \vdots & \vdots & \vdots &  & \vdots \\
  0 &  \cdots & 0 & \ast & z_{1} & \cdots & z_{(k-1)(s-2)+1}\\
  0 &  \cdots & 0 & \ast & \boldsymbol{ x_{(k+1)s+2}} & \cdots & \boldsymbol{ x_{2ks+2}}\\
  0 &  \cdots & 0 & \ast & \boldsymbol{ x_{(k+1)s+1}} & \cdots & \boldsymbol{ x_{2ks+1}}
\end{array}
\right].
\]
\item If $(x_{s},x_{2s}\ldots,x_{ks})\neq$ $ (0,0,\ldots,0)$, then $x$ is covered by the following element of $A_{z}$:
\[
\left[
\begin{array}{ccccccc}
  \boldsymbol{x_{s}} & \cdots & \boldsymbol{ x_{ks}} & \ast & z_{s-2} & \cdots & z_{k(s-2)}\\
  \boldsymbol{ x_{s-1}} & \cdots & \boldsymbol{ x_{ks-1}} & \ast          & z_{s-3} & \cdots & z_{k(s-2)-1}\\
  0 & \cdots & 0 & \ast          & z_{s-4} & \cdots & z_{k(s-2)-2}\\
  \vdots &    & \vdots & \vdots & \vdots &  & \vdots \\
  0 &  \cdots & 0 & \ast & z_{1} & \cdots & z_{(k-1)(s-2)+1}\\
  0 &  \cdots & 0 & \ast & x_{s} & \cdots & x_{ks}\\
  0 &  \cdots & 0 & \ast & x_{s-1} & \cdots & x_{ks-1}
\end{array}
\right].
\]
\item If $(x_{(k+1)s+2},x_{(k+2)s+2},\ldots,x_{2ks+2})=$ $ (x_{s},x_{2s}\ldots,x_{ks})$ $= (0,0,\ldots,0)$, then $x$ is covered by the following element of $A_{z}$:
\[
\left[
\begin{array}{ccccccc}
  {\bf 0} & \cdots & {\bf 0} & \ast & z_{s-2} & \cdots & z_{k(s-2)}\\
  0 & \cdots & 0 & \ast          & z_{s-3} & \cdots & z_{k(s-2)-1}\\
  0 & \cdots & 0 & \ast          & z_{s-4} & \cdots & z_{k(s-2)-2}\\
  \vdots &    & \vdots & \vdots & \vdots &  & \vdots \\
  0 &  \cdots & 0 & \ast & z_{1} & \cdots & z_{(k-1)(s-2)+1}\\
  0 &  \cdots & 0 & \ast & {\bf 0} & \cdots & {\bf 0}\\
  0 &  \cdots & 0 & \ast & 0 & \cdots & 0
\end{array}
\right].
\]
\end{enumerate}

Therefore, the set $A=\bigcup_{z\in \mathbb{Z}_{q}^{k(s-2)+j}}A_{z}$ of size $q^{ks+1}-q^{k(s-2)+j}(q^{k}-1)$ is a $((k+1)s-j)$-covering of the NRT
space $\mathbb{Z}_{q}^{(2k+1)s}$. 
\end{proof}

In addition to Theorem \ref{bigcaseodd} being a refinement of the bound given in Proposition \ref{triviais},
for $k=j=1$, Theorem \ref{bigcaseodd} improves the bound $q^{s+1}-q$ given in \cite[Theorem 4]{castoldi2019cai} item $(2)$.

\begin{corollary}\label{caseodd}
For  $q\geq 2$ and $s\geq 3$, $K_{q}^{\mathcal{R}}(3,s,2s-1)\leq q^{s+1}-q^{s-1}(q-1)$.
\end{corollary}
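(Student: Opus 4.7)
The plan is to derive Corollary~\ref{caseodd} directly from Theorem~\ref{bigcaseodd} by specialization of parameters. First I would set $k=1$ in Theorem~\ref{bigcaseodd}, so that $2k+1 = 3$, matching the first parameter of the NRT poset appearing in the corollary. Next I would take $j=1$, which is within the permitted range $1 \le j \le s$ and yields radius $(k+1)s-j = 2s-1$, exactly the radius in the corollary statement. The hypotheses $q \ge 2$, $s \ge 3$ and $k \ge 1$ of the theorem are then all satisfied under the hypotheses of the corollary.

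With these substitutions, the right-hand side of the inequality in Theorem~\ref{bigcaseodd} becomes
\[
q^{ks+j} - q^{k(s-2)+j}(q^k-1) \;=\; q^{s+1} - q^{s-1}(q-1),
\]
which is exactly the bound claimed in Corollary~\ref{caseodd}. Since this is a pure specialization of an already proved statement, there is no technical obstacle beyond verifying the arithmetic of the substitution and confirming the parameter range. All the constructive content, namely the explicit three-step construction of the covering code (the partition $C = \bigcup_{z} C_{z}$, the replacement producing $A_{z}$ that removes the $q^k - 1$ degenerate codewords in each block, and the three-case coverage verification), lives inside the proof of Theorem~\ref{bigcaseodd} and need not be redone here.
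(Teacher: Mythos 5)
Your proposal is correct and matches the paper's (implicit) derivation exactly: the paper obtains Corollary~\ref{caseodd} as the $k=j=1$ specialization of Theorem~\ref{bigcaseodd}, and your arithmetic $q^{ks+j}-q^{k(s-2)+j}(q^k-1)=q^{s+1}-q^{s-1}(q-1)$ is right.
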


For $j=s$, we have the following particular case of Theorem \ref{bigcaseodd} which also improves upper bounds given in \cite{castoldi2015covering}, see Table \ref{tab3}.

\begin{corollary}\label{cases}
For $q\geq 2$ and $s\geq 3$,  $K_{q}^{\mathcal{R}}(2k+1,s,ks)\leq q^{(k+1)s}-q^{k(s-2)+s}(q^k-1)$.
\end{corollary}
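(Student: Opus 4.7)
The plan is to derive this corollary as an immediate specialization of Theorem \ref{bigcaseodd}, taking the parameter $j$ at the boundary value $j=s$. The hypotheses on $q$, $s$, and $k$ in the corollary already match those of the theorem (the theorem further allows $j$ to range over $\{1,\ldots,s\}$, so $j=s$ is permitted), and no new construction or combinatorial argument is needed beyond what the theorem already provides.

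The key verification is arithmetic. Setting $j=s$ in the theorem's radius $(k+1)s-j$ yields $(k+1)s-s = ks$, which matches the radius $ks$ appearing in the corollary. Setting $j=s$ in the theorem's bound $q^{ks+j}-q^{k(s-2)+j}(q^k-1)$ yields
\[
q^{ks+s}-q^{k(s-2)+s}(q^k-1) \;=\; q^{(k+1)s}-q^{k(s-2)+s}(q^k-1),
\]
which is precisely the bound stated in the corollary. Hence the inequality follows directly, and no further proof is required.

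There is no real obstacle here, since Theorem \ref{bigcaseodd} has already done all the combinatorial work (partitioning the covering code $C$ into the subfamilies $\{C_z\}$, replacing each $C_z$ by the modified set $A_z$, and verifying covering in the three case analysis). The role of this corollary is simply to record a specially clean instance worth highlighting for comparison with the tables in \cite{castoldi2015covering}, so the proof is a one-line substitution.
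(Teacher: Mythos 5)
Your proposal is correct and matches the paper exactly: the paper presents Corollary \ref{cases} as the particular case $j=s$ of Theorem \ref{bigcaseodd}, and your substitution check (radius $(k+1)s-s=ks$ and bound $q^{ks+s}-q^{k(s-2)+s}(q^k-1)=q^{(k+1)s}-q^{k(s-2)+s}(q^k-1)$) is precisely all that is needed. No gap.
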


\subsection{New covering codes for $m$ even} \label{sec52}

We show next Theorem \ref{caseven} which is a parallel result to Theorem \ref{bigcaseodd} for $m$ even, and generalizes \cite[Theorem 4]{castoldi2019cai} item $(1)$. The proof of Theorem \ref{caseven} follows the same method applied to prove Theorem \ref{bigcaseodd}.

\begin{theorem}\label{caseven}
For $q\geq 2$, $s\geq 3$ and $k\geq 1$, $K_{q}^{\mathcal{R}}(2k,s,ks)\leq q^{ks}-q^{k(s-2)}(q^{k}-1)$.
\end{theorem}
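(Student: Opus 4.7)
My plan is to follow the three-step strategy of Theorem~\ref{bigcaseodd}, specializing to $m=2k$ and $R=ks$. I begin with the trivial $ks$-covering $C$ of $\mathbb{Z}_q^{2ks}$ of size $q^{ks}$, consisting of all codewords with zero entries in the first $k$ blocks $B_0,\ldots,B_{k-1}$ and arbitrary entries in the last $k$ blocks; this is the covering used to prove Proposition~\ref{triviais} for these parameters. In Step 1 I partition $C$ into $q^{k(s-2)}$ parts $C_z$ indexed by $z=(z_1,\ldots,z_{k(s-2)})\in\mathbb{Z}_q^{k(s-2)}$, where $C_z$ fixes rows $3,\ldots,s$ (counted from the bottom) in each of the last $k$ blocks to the components of $z$ read block by block, and leaves the bottom two rows of the last $k$ blocks free. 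Then $|C_z|=q^{2k}$, the $C_z$'s partition $C$, and each $C_z$ is a $ks$-covering of the product space $\mathcal{Z}_z:=\mathbb{Z}_q^{ks}\times\prod_{i=0}^{k-1}\bigl(\mathbb{Z}_q^{2}\times\{(z_{i(s-2)+1},\ldots,z_{i(s-2)+s-2})\}\bigr)$.

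In Step 2, within each $C_z$ I discard the $q^k-1$ codewords whose bottom row in the last $k$ blocks is nonzero but whose second row is zero. For each surviving $c$ I define $c'$ by copying the pair $c_{(k+i)s+1}, c_{(k+i)s+2}$ into positions $is+s-1$ and $is+s$ of the first $k$ blocks for $i=0,\ldots,k-1$, keeping the remaining entries of the first $k$ blocks at zero, producing $A_z$ with $|A_z|=q^{2k}-(q^k-1)$. In Step 3 I verify $A_z$ is a $ks$-covering of $\mathcal{Z}_z$ by three cases on $x\in\mathcal{Z}_z$: (i) if $(x_{ks+2},\ldots,x_{(2k-1)s+2})\neq(0,\ldots,0)$, take $c$ matching $x$ on the bottom two rows of the last $k$ blocks (so $c$ survives because its second row is nonzero, $c'$ matches $x$ throughout the last $k$ blocks, and each first-block contribution $j_i\leq s$ summing to $\leq ks$); (ii) if $(x_s,\ldots,x_{ks})\neq(0,\ldots,0)$, take $c$ whose bottom and second rows in the last $k$ blocks equal $(x_{s-1},\ldots,x_{ks-1})$ and $(x_s,\ldots,x_{ks})$ respectively (then $c$ survives, $c'$ matches $x$ on the top two rows of the first $k$ blocks, each $j_i\leq s-2$ and each $j_{k+i}\leq 2$, totalling $k(s-2)+2k=ks$); (iii) if both vectors vanish, take the $c$ with zero bottom two rows in the last $k$ blocks (which survives), so $j_i\leq s-1$ and $j_{k+i}\leq 1$ yield total $\leq k(s-1)+k=ks$.

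Setting $A=\bigcup_{z} A_z$, the family $\{\mathcal{Z}_z\}$ partitions $\mathbb{Z}_q^{2ks}$ and each $A_z$ covers $\mathcal{Z}_z$, so $A$ is a $ks$-covering with $|A|=q^{k(s-2)}(q^{2k}-(q^k-1))=q^{ks}-q^{k(s-2)}(q^k-1)$, which is the required bound. The main obstacle I expect is the accounting in case (ii), where the copy-paste modification simultaneously introduces new disagreements in both halves of the codeword: the distance must be balanced between the first $k$ blocks (capped at $s-2$ per block thanks to the matched top rows) and the last $k$ blocks (capped at $2$ per block since only the bottom two rows can disagree), so that the total stays exactly at $ks$. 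This delicate bookkeeping is the analogue of the argument in Theorem~\ref{bigcaseodd}, and the absence of a middle ``$\ast$'' block, due to $m$ being even, only simplifies the recipe.
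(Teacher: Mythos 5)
Your proposal is correct and follows essentially the same route as the paper's own proof: the same partition of the trivial covering into the classes $C_z$, the same deletion of the $q^k-1$ codewords with nonzero bottom row and zero second row, the same copy of the bottom two rows of the last $k$ blocks into the top two positions of the first $k$ blocks, and the same three-case distance accounting. The bookkeeping in your cases (i)--(iii) matches the paper's verification exactly, so no gap remains.
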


\begin{proof}
The general upper bound for $K_{q}^{\mathcal{R}}(2k,s,ks)$ is $q^{ks}$, and
a $ks$-covering $C$ of the NRT space $\mathbb{Z}_{q}^{2ks}$ of size $q^{ks}$ is formed by the codewords:
\[
c=\left[
\begin{array}{cccccc}
  0 &  \cdots & 0 & c_{(k+1)s} & \cdots & c_{2ks}\\
  \vdots &    & \vdots & \vdots &  & \vdots \\
  0 &  \cdots & 0 & c_{ks+2} & \cdots & c_{(2k-1)s+2}\\
  0 &  \cdots & 0 & c_{ks+1} & \cdots & c_{(2k-1)s+1}
\end{array}
\right],
\]
according to the proof of Proposition \ref{triviais}.
We divide the proof into three steps.

\emph{Step 1:} We partition the set $C$ into $q^{k(s-2)}$ parts indexed by the set $\mathbb{Z}_{q}^{k(s-2)}$. For each $z=(z_{1},\ldots,z_{k(s-2)})\in \mathbb{Z}_{q}^{k(s-2)}$, let $C_{z}$ be the subset of $C$ formed by the codewords:
\[
c=\left[
\begin{array}{cccccc}
  0 &  \cdots & 0 & z_{s-2} & \cdots & z_{k(s-2)}\\
  \vdots &    & \vdots & \vdots &  & \vdots \\
  0 &  \cdots & 0 & z_{1} & \cdots & z_{(k-1)(s-2)+1}\\
  0 &  \cdots & 0 & c_{ks+2} & \cdots & c_{(2k-1)s+2}\\
  0 &  \cdots & 0 & c_{ks+1} & \cdots & c_{(2k-1)s+1}
\end{array}
\right].
\]
For each $z\in \mathbb{Z}_{q}^{k(s-2)}$, define
\[
\mathcal{Z}_{z}=\mathbb{Z}_{q}^{ks}\times (\mathbb{Z}_{q}^{2}\times\{(z_{1},\ldots,z_{s-2})\})\times \ldots \times (\mathbb{Z}_{q}^{2}\times\{(z_{(k-1)(s-2)+1},\ldots,z_{k(s-2)})\}).
\]

The following properties hold:
\begin{itemize}
\item[(a)] $C_{z}\cap C_{z'}=\emptyset$ if and only if $z\neq z'$;
\item[(b)] \textbar $C_{z}$\textbar$=q^{2k}$ for all $z\in \mathbb{Z}_{q}^{k(s-2)}$;
\item[(c)] $\displaystyle C=\cup_{z\in \mathbb{Z}_{q}^{k(s-2)}} C_{z}$;
\item[(d)] $C_{z}$ is a $ks$-covering of the  NRT space  $\mathcal{Z}_{z}$ over the NRT poset $[2k\cdot s]$.
\end{itemize}
We note that $(a)$, $(b)$ and $(c)$ tell us the set $\{C_{z}:z\in \mathbb{Z}_{q}^{k(s-2)}\}$ is a partition of $C$.

\emph{Step 2:} For each $z\in \mathbb{Z}_{q}^{k(s-2)}$, we construct a new set $A_{z}$ from $C_{z}$
such that $A_{z}$ is a $ks$-covering of the NRT space $\mathcal{Z}_{z}$.
For each $c\in C_{z}$ such that  $(c_{ks+2},c_{(k+1)s+2},\ldots,$ $c_{(2k-1)s+2})$ $\neq (0,0,\ldots,0)$ or $(c_{ks+1},c_{(k+1)s+1},\ldots,c_{(2k-1)s+1})$ $=$ $(c_{ks+2},c_{(k+1)s+2},$ $\ldots,c_{(2k-1)s+2})= $ $(0,0,\ldots,0)$, define
\[
c'=\left[
\begin{array}{cccccc}
  c_{ks+2} & \cdots & c_{(2k-1)s+2} & z_{s-2} & \cdots & z_{k(s-2)}\\
  c_{ks+1} & \cdots & c_{(2k-1)s+1} & z_{s-3} & \cdots & z_{k(s-2)-1}\\
  0 & \cdots & 0 & z_{s-4} & \cdots & z_{k(s-2)-2}\\
  \vdots &    & \vdots & \vdots &  & \vdots \\
  0 &  \cdots & 0 & z_{1} & \cdots & z_{(k-1)(s-2)+1}\\
  0 &  \cdots & 0 & c_{ks+2} & \cdots & c_{(2k-1)s+2}\\
  0 &  \cdots & 0 & c_{ks+1} & \cdots & c_{(2k-1)s+1}
\end{array}
\right].
\]
Let $A_{z}$ be the set of codewords $c'$ defined above. The types of codewords in $C_{z}$ that we are not using to define $A_{z}$ are those such that
$(c_{ks+1},c_{(k+1)s+1},\ldots,$ $c_{(2k-1)s+1})\neq (0,0,\ldots,0)$ and $(c_{ks+2},c_{(k+1)s+2},\ldots,$ $c_{(2k-1)s+2})$ $= (0,0,\ldots,0)$, and there are \linebreak $q^k-1$ such codewords.
Hence, $A_{z}$ has size $q^{2k}-(q^k-1)$.

\emph{Step 3:} We now show that the set $A_{z}$ is a $ks$-covering of the NRT space $\mathcal{Z}_{z}$. We divide the proof into three cases.
Indeed, for $x \in \mathcal{Z}_{z}$, we know that $x$ and $c'\in A_{z}$ coincide in those $k(s-2)$ positions that are equal to $z$. We highlight in bold the $2k$ positions in each codeword that coincide with the respective positions in $x$.
\begin{enumerate}
\item If $(x_{ks+2},x_{(k+1)s+2},\ldots,x_{(2k-1)s+2})\neq (0,0,\ldots,0)$, then $x$ is covered by the following element of $A_{z}$:
\[
\left[
\begin{array}{cccccc}
  x_{ks+2} & \cdots & x_{(2k-1)s+2} & z_{s-2} & \cdots & z_{k(s-2)}\\
  x_{ks+1} & \cdots & x_{(2k-1)s+1} & z_{s-3} & \cdots & z_{k(s-2)-1}\\
  0 & \cdots & 0 & z_{s-4} & \cdots & z_{k(s-2)-2}\\
  \vdots &    & \vdots & \vdots &  & \vdots \\
  0 &  \cdots & 0 & z_{1} & \cdots & z_{(k-1)(s-2)+1}\\
  0 &  \cdots & 0 & {\bf x_{ks+2}} & \cdots & {\bf x_{(2k-1)s+2}}\\
  0 &  \cdots & 0 & {\bf x_{ks+1}} & \cdots & {\bf x_{(2k-1)s+1}}
\end{array}
\right].
\]
\item If  $(x_{s},x_{2s}\ldots,x_{ks})\neq (0,0,\ldots,0)$, then $x$ is covered by the following element of $A_{z}$:
\[
\left[
\begin{array}{cccccc}
  {\bf x_{s}} & \cdots & {\bf x_{ks}} & z_{s-2} & \cdots & z_{k(s-2)}\\
  {\bf x_{s-1}} & \cdots & {\bf x_{ks-1}} & z_{s-3} & \cdots & z_{k(s-2)-1}\\
   0 & \cdots & 0 & z_{s-4} & \cdots & z_{k(s-2)-2}\\
  \vdots &    & \vdots & \vdots &  & \vdots \\
  0 &  \cdots & 0 & z_{1} & \cdots & z_{(k-1)(s-2)+1}\\
  0 &  \cdots & 0 & x_{s} & \cdots & x_{ks}\\
  0 &  \cdots & 0 & x_{s-1} & \cdots & x_{ks-1}
\end{array}
\right].
\]
\item If $(x_{ks+2},x_{(k+1)s+2},\ldots,x_{(2k-1)s+2})= (x_{s},x_{2s}\ldots,x_{ks})= (0,0,\ldots,0)$, then $x$ is covered by the following element of $A_{z}$:
\[
\left[
\begin{array}{cccccc}
  {\bf 0} & \cdots & {\bf 0} & z_{s-2} & \cdots & z_{k(s-2)}\\
  0 & \cdots & 0 & z_{s-3} & \cdots & z_{k(s-2)-1}\\
  \vdots &    & \vdots & \vdots &  & \vdots \\
  0 &  \cdots & 0 & z_{1} & \cdots & z_{(k-1)(s-2)+1}\\
  0 &  \cdots & 0 & {\bf 0} & \cdots & {\bf 0}\\
  0 &  \cdots & 0 & {\bf 0} & \cdots & {\bf 0}
\end{array}
\right].
\]
\end{enumerate}

Therefore, the set $A=\bigcup_{z\in \mathbb{Z}_{q}^{k(s-2)}}A_{z}$ of size $q^{ks}-q^{k(s-2)}(q^k-1)$ is a $ks$-covering of the NRT
space $\mathbb{Z}_{q}^{2ks}$.
\end{proof}

The construction above is illustrated in the following example.

\begin{example}\label{exe1}
Proposition \ref{triviais} gives the upper bound $K_{2}^{\mathcal{R}}(2,3,3)\leq 8$.
A $3$-covering code of the NRT space $\mathbb{Z}_{2}^{6}$ (NRT poset
$[2\cdot 3]$) that gives the general upper bound for $K_{2}^{\mathcal{R}}(2,3,3)$ is $C=C_{0}\cup C_{1}$, where
\[
C_{0}=\Bigg\{\left[\begin{array}{cc}
          0 & 0 \\
          0 & 0 \\
          0 & 0
        \end{array}
        \right],
        \left[\begin{array}{cc}
          0 & 0 \\
          0 & 1 \\
          0 & 0
        \end{array}
        \right],
        \left[\begin{array}{cc}
          0 & 0 \\
          0 & 0 \\
          0 & 1
        \end{array}
        \right],
        \left[\begin{array}{cc}
          0 & 0 \\
          0 & 1 \\
          0 & 1
        \end{array}
        \right]\Bigg\},
\]
\[
C_{1}=\Bigg\{\left[\begin{array}{cc}
          0 & 1 \\
          0 & 0 \\
          0 & 0
        \end{array}
        \right],
        \left[\begin{array}{cc}
          0 & 1 \\
          0 & 1 \\
          0 & 0
        \end{array}
        \right],
        \left[\begin{array}{cc}
          0 & 1 \\
          0 & 0 \\
          0 & 1
        \end{array}
        \right],
        \left[\begin{array}{cc}
          0 & 1 \\
          0 & 1 \\
          0 & 1
        \end{array}
        \right]\Bigg\}.
\]
Following the construction in Theorem \ref{caseven},  we delete the third codeword of $C_{0}$ and $C_{1}$, and using the remaining codewords, we construct $A_{0}$ and $A_{1}$, respectively.
Let $A=A_{0}\cup A_{1}$, where
\[
A_{0}=\Bigg\{\left[\begin{array}{cc}
          0 & 0 \\
          0 & 0 \\
          0 & 0
          \end{array}
          \right],
          \left[\begin{array}{cc}
          1 & 0 \\
          0 & 1 \\
          0 & 0
          \end{array}
          \right],\left[\begin{array}{cc}
          1 & 0 \\
          1 & 1 \\
          0 & 1
          \end{array}
          \right]\Bigg\}
\ \textrm{and} \
A_{1}=\Bigg\{\left[\begin{array}{cc}
          0 & 1 \\
          0 & 0 \\
          0 & 0
          \end{array}
          \right],
          \left[\begin{array}{cc}
          1 & 1 \\
          0 & 1 \\
          0 & 0
          \end{array}
          \right],
          \left[\begin{array}{cc}
          1 & 1 \\
          1 & 1 \\
          0 & 1
          \end{array}
          \right]\Bigg\}.
\]
Therefore, $A$ is $3$-covering code of the NRT space $\mathbb{Z}_{2}^{6}$ with 6 codewords improving the previous upper bound 8.
We conjecture that $K_{2}^{\mathcal{R}}(2,3,3)=6$.
\end{example}

For $k=1$, Theorem \ref{caseven} improves the bound $q^{s}-q^{s-2}$ given in \cite[Theorem 4]{castoldi2019cai} item $(1)$.

\begin{corollary}\label{casevencai}
For  $q\geq 2$ and $s\geq 3$, $K_{q}^{\mathcal{R}}(2,s,s)\leq q^{s}-q^{s-2}(q-1)$.
\end{corollary}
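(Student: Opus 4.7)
The proof is essentially immediate: Corollary~\ref{casevencai} is the specialization of Theorem~\ref{caseven} at $k=1$. My plan is simply to substitute $k=1$ into the bound $q^{ks}-q^{k(s-2)}(q^{k}-1)$, obtaining
\[
K_{q}^{\mathcal{R}}(2,s,s) \leq q^{1\cdot s}-q^{1\cdot(s-2)}(q^{1}-1) = q^{s}-q^{s-2}(q-1),
\]
which is exactly the statement. Since Theorem~\ref{caseven} requires $k\geq 1$ and $s\geq 3$, and the corollary adopts the same hypotheses on $s$ and $q$, the assumptions are satisfied and no further argument is needed.

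If a sanity check is desired, I would trace through what the general construction of Theorem~\ref{caseven} becomes in this regime. For $k=1$ we work in the NRT poset $[2\cdot s]$, and the partition index set becomes $\mathbb{Z}_q^{s-2}$. Each part $C_z$ has $q^{2}$ codewords, and the construction discards $q^{1}-1 = q-1$ of them per part, yielding a covering of size $q^{s-2}\bigl(q^{2}-(q-1)\bigr) = q^{s}-q^{s-2}(q-1)$, matching the stated bound. This gives a concrete check that the arithmetic specializes correctly.

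Finally, it is worth noting the promised comparison with \cite[Theorem~4]{castoldi2019cai} item $(1)$, which gave $K_{q}^{\mathcal{R}}(2,s,s)\leq q^{s}-q^{s-2}$. The new bound is smaller by $q^{s-2}(q-1)-q^{s-2}=q^{s-2}(q-2)$, so it strictly improves the earlier bound whenever $q\geq 3$ (and coincides with it for $q=2$). There is no real obstacle in the proof itself, since the work has already been done in Theorem~\ref{caseven}; the only thing to verify is the algebraic simplification above.
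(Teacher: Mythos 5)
Your proof is correct and matches the paper's (implicit) argument exactly: the corollary is simply Theorem~\ref{caseven} specialized to $k=1$, and your substitution and arithmetic check are accurate. The comparison with the earlier bound $q^{s}-q^{s-2}$ is also the one the paper intends.
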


As a consequence of Theorem \ref{caseven} and \cite[Proposition 17]{castoldi2015covering} we obtain Theorem \ref{bigcaseodd} for $j=0$,  as shown in the next corollary. This result improves upper bounds given in \cite{castoldi2015covering} as displayed in Table \ref{tab3}.

\begin{corollary}\label{caseven0}
For $q\geq 2$ and $s\geq 3$, $K_{q}^{\mathcal{R}}(2k+1,s,(k+1)s)\leq q^{ks}-q^{k(s-2)}(q^{k}-1)$.
\end{corollary}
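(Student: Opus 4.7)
The plan is to derive this corollary by chaining together Theorem \ref{caseven} with the recursive inequality \cite[Proposition 17]{castoldi2015covering}. The latter states that an $R$-covering of the NRT space $\mathbb{Z}_q^{ms}$ induces an $(R+s)$-covering of $\mathbb{Z}_q^{(m+1)s}$ by appending a zero chain of length $s$ to each codeword; in symbols,
\[
K_{q}^{\mathcal{R}}(m+1,s,R+s)\leq K_{q}^{\mathcal{R}}(m,s,R).
\]

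To justify this recursive step briefly: given an $R$-covering $C\subseteq\mathbb{Z}_q^{ms}$, I would form $C'=\{(c,0,\ldots,0):c\in C\}\subseteq\mathbb{Z}_q^{(m+1)s}$. For any $x\in\mathbb{Z}_q^{(m+1)s}$, pick $c\in C$ covering the projection of $x$ onto the first $ms$ coordinates under the NRT metric of $[m\cdot s]$. The ideal generated by the support of the difference between $x$ and the extended codeword, viewed inside $[(m+1)\cdot s]$, decomposes as the union of the original ideal in $[m\cdot s]$ (of size at most $R$) and a sub-ideal of the new chain (of size at most $s$), for a total cardinality at most $R+s$.

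Taking $m=2k$ and $R=ks$ in this recursive bound and then invoking Theorem \ref{caseven} immediately yields
\[
K_{q}^{\mathcal{R}}(2k+1,s,(k+1)s)\leq K_{q}^{\mathcal{R}}(2k,s,ks)\leq q^{ks}-q^{k(s-2)}(q^{k}-1),
\]
which is exactly the claim. Since the argument is just a composition of two known facts, there is no real obstacle; the only thing to check carefully is that \cite[Proposition 17]{castoldi2015covering} indeed provides a radius increment of exactly $s$ when adding one full chain of length $s$, which is what makes the radius $(k+1)s$ on the left match the radius $ks$ on the right.
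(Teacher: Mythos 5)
Your proposal is correct and matches the paper's own derivation: the paper obtains this corollary precisely by combining Theorem \ref{caseven} with the chain-appending recursion $K_{q}^{\mathcal{R}}(m+1,s,R+s)\leq K_{q}^{\mathcal{R}}(m,s,R)$ from \cite[Proposition 17]{castoldi2015covering}, instantiated at $m=2k$, $R=ks$. Your brief justification of the recursive step is also sound.
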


\subsection{A table comparing new and old upper bounds}

We finish this section by giving a table of upper bounds. We compare the upper bounds on covering codes in NRT spaces obtained in Subsections \ref{sec51} and \ref{sec52} with the upper bounds in the tables given in  \cite{castoldi2015covering}.  The column ``Result'' gives the result in this section used to get the respective bound in the column ``New bounds''. Table \ref{tab3} shows that sometimes the results of this section improve upper bounds from \cite{castoldi2015covering}. These improvements are marked in bold.
\begin{table}[h]
\centering
\caption{Some new upper bounds for $K_{q}^{\mathcal{R}}(m,s,R)$.}\label{tab3}
\begin{tabular}{|c|c|c|c|c|c|c|}
  \hline
  $q$ & $m$ & $s$ & $R$ & Bounds from \cite{castoldi2015covering} & New bounds & Result  \\ \hline
  2 & 2 & 3 & 3 & 8 & {\bf 6} & Corollary \ref{casevencai} \\
  2 & 2 & 4 & 4 & 12 & 12 & Corollary \ref{casevencai}  \\
  2 & 2 & 5 & 5 & 32 & {\bf 24} & Corollary \ref{casevencai}  \\
  2 & 3 & 3 & 3 & 64 & {\bf 48} & Corollary \ref{cases}  \\
  2 & 3 & 3 & 4 & 16 & 24 & Theorem \ref{bigcaseodd}  \\
  2 & 3 & 3 & 5 & 8 & 12 & Corollary \ref{caseodd} \\
  2 & 3 & 3 & 6 & 8 & {\bf 6} & Corollary \ref{caseven0} \\
  2 & 3 & 4 & 7 & 16 & 24 & Corollary \ref{caseodd} \\
  2 & 3 & 4 & 8 & 16 & {\bf 12} & Corollary \ref{caseven0} \\
  2 & 4 & 3 & 6 & 24 & 52 & Theorem  \ref{caseven} \\
  2 & 4 & 4 & 8 & 112 & 208 & Theorem  \ref{caseven} \\
  \hline
\end{tabular}
\end{table}

\section{Constructions of covering codes using ordered covering arrays}\label{sec6}

In this section, ordered covering arrays are used to construct
covering codes in NRT spaces improving upper bounds on their size for larger alphabets. Theorem \ref{teoca}  is a generalization of a result
already shown for covering codes in Hamming spaces connected
with surjective matrices and Maximum Distance Separable (MDS) codes \cite{cohen1997covering}, and its proof has already appeared in the conference paper \cite{castoldi2019cai}.

MDS codes provide a useful tool to construct covering codes in Hamming spaces, see 
\cite{blokhuis1984more,carnielli1985covering,cohen1997covering} for instance. 
MDS codes can be applied in NRT spaces as follows. Suppose that there is an MDS code in the NRT space $\mathbb{Z}_{v}^{ms}$ with minimum distance $d+1$. For every $q\geq 2$, $K_{vq}^{\mathcal{R}}(m,s,d)\leq v^{ms-d}K_{q}^{\mathcal{R}}(m,s,d),$~\cite[Theorem 30]{castoldi2015covering}. An extension of this result based on ordered covering arrays is described below. Indeed, this is analogous to an equivalent result relating to covering codes and covering arrays for the Hamming metric~\cite[Theorem 3.7.10]{cohen1997covering}.

\begin{theorem}\label{teoca} (\cite[Theorem 3]{castoldi2019cai})
Let $v, q, m, s, R$ be positive integers such that \linebreak $0<R<ms$. Then
\[
K_{vq}^{\mathcal{R}}(m,s,R)\leq OCAN(ms-R,m,s,v) K_{q}^{\mathcal{R}}(m,s,R).
\]
\end{theorem}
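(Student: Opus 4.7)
The plan is to build a covering code over the alphabet of size $vq$ by gluing together an optimal $R$-covering $C$ of $\mathbb{Z}_q^{ms}$ with an OCA of strength $t=ms-R$ over an alphabet $V$ of size $v$. First I would identify $\mathbb{Z}_{vq}$ with $V\times\mathbb{Z}_q$ via any fixed bijection, so that each element of $\mathbb{Z}_{vq}^{ms}$ reads as a pair $(x^{(1)},x^{(2)})$ with $x^{(1)}\in V^{ms}$ and $x^{(2)}\in\mathbb{Z}_q^{ms}$. Letting $A$ be an $OCA(N;ms-R,m,s,v)$ with $N=OCAN(ms-R,m,s,v)$, I would form
\[
D=\{\,((a_1,c_1),\ldots,(a_{ms},c_{ms})) : a\text{ is a row of }A,\ c\in C\,\},
\]
a set of at most $N\cdot|C|$ codewords in $\mathbb{Z}_{vq}^{ms}$. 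Everything then reduces to showing that $D$ is an $R$-covering.

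Given $x=(x^{(1)},x^{(2)})\in\mathbb{Z}_{vq}^{ms}$, I would use the covering property of $C$ to pick $c\in C$ with $d_{\mathcal{R}}(x^{(2)},c)\le R$, and set $I_c=\langle\mathrm{supp}(x^{(2)}-c)\rangle$, an ideal of size at most $R$ on whose complement $x^{(2)}$ and $c$ already agree. The defining property of the OCA applies only to anti-ideals of size \emph{exactly} $ms-R$, so I would enlarge $I_c$ to an ideal $I\supseteq I_c$ of size exactly $R$ by repeatedly appending a minimal element of the current complement (a finite non-empty up-set in a poset always has a minimal element, and appending such an element preserves the ideal property). Then $J=[m\cdot s]\setminus I$ is an anti-ideal of size $ms-R$ contained in $[m\cdot s]\setminus I_c$, so $x^{(2)}$ still coincides with $c$ on $J$.

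Finally, invoking the OCA at the anti-ideal $J$ produces a row $a$ of $A$ with $a_j=x^{(1)}_j$ for all $j\in J$, whence the codeword $d=((a_i,c_i))_{i=1}^{ms}\in D$ agrees with $x$ on $J$; therefore $\mathrm{supp}(x-d)\subseteq I$, and $d_{\mathcal{R}}(x,d)\le|I|=R$, giving the desired bound. The only real subtlety is the combinatorial bookkeeping in the previous paragraph, namely upgrading an ideal of size at most $R$ generated by the support of $x^{(2)}-c$ to an ideal of size exactly $R$ so that the resulting anti-ideal matches the strength of the OCA; everything else is a direct unwinding of the definitions of the NRT metric and of an OCA.
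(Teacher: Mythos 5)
Your proof is correct and is essentially the standard argument for this theorem (the product of an optimal $R$-covering over $\mathbb{Z}_q$ with an $OCA$ of strength $ms-R$ over a $v$-ary alphabet, matching the covering code on an anti-ideal complement of an ideal of size exactly $R$ that contains $\langle\mathrm{supp}(x^{(2)}-c)\rangle$), which is the same NRT analogue of the Hamming-metric result that the cited conference paper uses. The step you flag as the "only real subtlety" — padding the ideal to size exactly $R$ by adjoining minimal elements of its complement — is handled correctly.
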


Theorem \ref{teoca} is very flexible in the sense that if we know upper bounds for $OCAN(ms-R,m,s,v)$ or $K_{q}^{\mathcal{R}}(m,s,R)$, we might improve upper bounds for covering codes in NRT spaces for larger alphabets. 
Indeed, let us illustrate  how the previous results on OCAs and covering codes can be applied in connection with Theorem \ref{teoca}.

Let $m$ be a positive integer such that $(t-1)q+1\leq m \leq (t-1)vq$. Since $K_{q}^{\mathcal{R}}(m,s,ms-t)=q$ (see \cite[Theorem 13]{castoldi2015covering}), Theorem \ref{teoca} implies 
\begin{equation} \label{eqcon1}
K_{vq}^{\mathcal{R}}(m,s,ms-t)\leq OCAN(t,m,s,v)\cdot q.
\end{equation}
In particular, Eq.~(\ref{eqcon1}) when $v=2$ and $t=3$ yields $K_{2q}^{\mathcal{R}}(2m,s,2ms-3)\leq OCAN(3,2m,s,2)\cdot q.$ For $s=2$ or $s=3$, Krikorian  \cite[Theorem 4.2.3]{krikorian2011combinatorial} shows that
\[
OCAN(3,2m,s,2)\leq OCAN(3,m,s,2)+OCAN(2,m,2,2).
\]
By Eq.~(\ref{thm6}),  $OCAN(2,m,2,2)=CAN(2,m,2)$. The combination of the facts above produces 
\[ K_{2q}^{\mathcal{R}}(2m,s,2ms-3) \leq (OCAN(3,m,s,2)+CAN(2,m,2)) \cdot q.
\]

Let $v$ be a prime power, $m\leq v+1$ and $s\leq t$. Since $OCAN(t,m,s,v)=v^{t}$ (Eq. (\ref{eq1})), Theorem \ref{teoca} implies $K_{vq}^{\mathcal{R}}(m,s,ms-t)$ $\leq v^{t} \cdot K_{q}^{\mathcal{R}}(m,s,ms-t).$ \linebreak Moreover, a combination of Eq.~(\ref{corol3}) and  Theorem \ref{teoca} produces \linebreak $K_{(v-1)q}^{\mathcal{R}}(v+1,t,vt)$  $\leq$ $(v^t -2) \cdot K_{q}^{\mathcal{R}}(v+1,t,vt)$.

The discussion above is based on  Corollaries 5 and 6 in \cite{castoldi2019cai}.
Further applications of Theorem \ref{teoca} are available in \cite{castoldi2019cai}.

Here we continue to explore  new relationships in the same spirit as mentioned above. As the main goal, the results from Section \ref{sec5} combined with  Theorem  \ref{teoca} might improve upper bounds for covering codes in NRT spaces.

\begin{corollary}\label{corol50}
Let $v\geq 2$, $q\geq 2$, $s\geq 3$ and $k\geq 1$ be positive integers.
\begin{itemize}
\item[(1)] For $j=0,\ldots,s$, $K_{vq}^{\mathcal{R}}(2k+1,s,(k+1)s-j)\leq$ $OCAN(ks+j,2k+1,s,v)\cdot$ $[q^{ks+j}-q^{k(s-2)+j}(q^{k}-1)]$.
\item[(2)] $K_{vq}^{\mathcal{R}}(2k,s,ks)\leq OCAN(ks,2k,s,v) \cdot [q^{ks}-q^{k(s-2)}(q^{k}-1)]$.
\end{itemize}
\end{corollary}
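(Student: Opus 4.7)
The plan is to obtain both inequalities as immediate consequences of Theorem~\ref{teoca}, after choosing the parameters so that the exponent $ms-R$ of the alphabet in $OCAN(ms-R,m,s,v)$ matches the strength of the OCA factor on the right-hand side, and then substituting the new improved upper bounds on $K_q^{\mathcal{R}}(m,s,R)$ from Section~\ref{sec5} for the $q$-ary factor.

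For part (1), I would take $m=2k+1$ and $R=(k+1)s-j$, so that $ms-R = (2k+1)s - (k+1)s + j = ks+j$. Theorem~\ref{teoca} then directly yields
\[
K_{vq}^{\mathcal{R}}(2k+1,s,(k+1)s-j) \leq OCAN(ks+j,2k+1,s,v) \cdot K_{q}^{\mathcal{R}}(2k+1,s,(k+1)s-j).
\]
For $j=1,\ldots,s$, the second factor is bounded using Theorem~\ref{bigcaseodd} by $q^{ks+j}-q^{k(s-2)+j}(q^{k}-1)$; for $j=0$, the same expression is obtained via Corollary~\ref{caseven0}, which handles the boundary case separately. Putting these together covers the full range $j=0,\ldots,s$ stated in the corollary.

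For part (2), I would take $m=2k$ and $R=ks$, so that $ms-R = 2ks-ks = ks$, and Theorem~\ref{teoca} gives
\[
K_{vq}^{\mathcal{R}}(2k,s,ks) \leq OCAN(ks,2k,s,v) \cdot K_{q}^{\mathcal{R}}(2k,s,ks).
\]
Substituting the bound $K_{q}^{\mathcal{R}}(2k,s,ks) \leq q^{ks}-q^{k(s-2)}(q^{k}-1)$ from Theorem~\ref{caseven} yields the claimed inequality.

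There is no real obstacle in this proof; the work has already been done in establishing Theorem~\ref{teoca} (the OCA-to-NRT-covering-code lifting) and in Theorems~\ref{bigcaseodd} and~\ref{caseven} and Corollary~\ref{caseven0} (the improved $q$-ary covering code bounds). The only thing to verify is the parameter match $ms-R$ in each case, which is immediate. Thus the entire proof reduces to two short chains of substitutions; the key conceptual step is the recognition that these are exactly the ingredients Theorem~\ref{teoca} is waiting for.
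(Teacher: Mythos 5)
Your proof is correct and follows essentially the same route as the paper: apply Theorem~\ref{teoca} with $m=2k+1$, $R=(k+1)s-j$ (resp.\ $m=2k$, $R=ks$) so that $ms-R$ matches the OCA strength, then substitute the bounds of Theorem~\ref{bigcaseodd} and Theorem~\ref{caseven}. You are in fact slightly more careful than the paper, which cites only Theorem~\ref{bigcaseodd} (stated for $j=1,\ldots,s$) even though the corollary claims $j=0,\ldots,s$; your explicit appeal to Corollary~\ref{caseven0} for the boundary case $j=0$ closes that small gap.
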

\begin{proof}
Item $(1)$ follows as an straightforward combination of Theorems  \ref{teoca} and \ref{bigcaseodd}. Analogously, item $(2)$ is derived from Theorems  \ref{teoca} and \ref{caseven}.
\end{proof}

For $k=1$ and a prime power $v$, two consequences of Corollary \ref{corol50} are described in the following result.

\begin{corollary} \label{corol7} Let $v$ be a prime power, $q\geq 2$, and $s\geq 3$. The following bounds hold:
\begin{enumerate}
\item[(1)]  $K_{vq}^{\mathcal{R}}(3,s,2s-1)\leq v^{s+1}\cdot [q^{s+1}-q^{s-1}(q-1)].$
\item[(2)]  $K_{vq}^{\mathcal{R}}(2,s,s)\leq v^s \cdot [q^{s}-q^{s-2}(q-1)].$
\end{enumerate}
\end{corollary}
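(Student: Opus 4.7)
The plan is to obtain both bounds as direct specializations of Corollary~\ref{corol50} with $k=1$, evaluating the $OCAN$ factor in each case via Eq.~(\ref{eq1}). The idea is that for a prime power $v$ and a suitable number of blocks, the ordered covering array number meets its trivial lower bound, so no new combinatorial construction is needed at this stage.

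For part $(2)$, I would simply instantiate Corollary~\ref{corol50} item $(2)$ with $k=1$, which yields
\[
K_{vq}^{\mathcal{R}}(2,s,s)\leq OCAN(s,2,s,v)\cdot [q^{s}-q^{s-2}(q-1)].
\]
Since $v$ is a prime power, $s\geq 3$, and $m=2$ satisfies $2\leq m\leq v+1$ because $v\geq 2$, Eq.~(\ref{eq1}) applies and gives $OCAN(s,2,s,v)=v^{s}$. Substituting produces the claimed inequality.

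For part $(1)$, I would instantiate Corollary~\ref{corol50} item $(1)$ with $k=1$ and $j=1$, obtaining
\[
K_{vq}^{\mathcal{R}}(3,s,2s-1)\leq OCAN(s+1,3,s,v)\cdot [q^{s+1}-q^{s-1}(q-1)].
\]
Here the $OCAN$ factor does not match Eq.~(\ref{eq1}) directly, since the strength is $t=s+1$ while the chain length is $s=t-1$. The key step is to invoke the equivalence established right after Proposition~\ref{prop8} (items $(1)$ and $(2)$ together), which says that an $OCA_{\lambda}(N;t,m,t-1,v)$ exists if and only if an $OCA_{\lambda}(N;t,m,t,v)$ exists. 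Applied with $t=s+1$ and $m=3$, this gives $OCAN(s+1,3,s,v)=OCAN(s+1,3,s+1,v)$. Then Eq.~(\ref{eq1}) with chain length $s+1\geq 4$ and $m=3\leq v+1$ (valid since $v\geq 2$) yields $OCAN(s+1,3,s+1,v)=v^{s+1}$, and the stated bound follows.

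Both parts are essentially mechanical once the bookkeeping is set up, so there is no serious obstacle. The only subtle point — and the one worth highlighting in the write-up — is the chain-length shift in part $(1)$, where Proposition~\ref{prop8} is needed to move from $s$ to $s+1$ before Eq.~(\ref{eq1}) can be applied. Checking side conditions ($v\geq 2$ so that $m=3\leq v+1$, and $s\geq 3$ so that the hypotheses of Eq.~(\ref{eq1}) hold for chain length $s+1$) ensures that the invocation of Eq.~(\ref{eq1}) is legitimate in both cases.
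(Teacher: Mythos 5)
Your proof is correct and follows essentially the same route as the paper: specialize Corollary~\ref{corol50} with $k=1$ and evaluate the $OCAN$ factors via Eq.~(\ref{eq1}) (the paper reaches $m=2$ or $m=3$ via Proposition~\ref{prop8} item (3), which is the same reduction). Your explicit use of the equivalence $OCAN(s+1,3,s,v)=OCAN(s+1,3,s+1,v)$ from Proposition~\ref{prop8} items (1)--(2) in part (1) fills in a step the paper leaves implicit, and is exactly the right justification.
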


\begin{proof}
$(1)$ For $v$ a prime power, $OCAN(s+1,v+1,s+1,v)=v^{s+1}$ by Eq.(\ref{eq1}).  Proposition \ref{prop8} item $(3)$ implies that $OCAN(s+1,3,s+1,v)=v^{s+1}$.  This value combined with Corollary \ref{corol50} item $(1)$  (when $k=1$ and $j=1$) derives the desired upper bound.

$(2)$ Since $OCAN(s,v+1,s,v)=v^{s}$ holds by Eq.(\ref{eq1}), Proposition \ref{prop8} item $(3)$ implies that $OCAN(s,2,s,v)=v^s$. An application of Corollary \ref{corol50} item $(2)$  (when $k=1$) concludes the upper bound.
\end{proof}

We now discuss the upper bounds on $K_{vq}^{\mathcal{R}}(3,s,2s-1)$ for a prime power $v$. Corollary \ref{caseodd} gives the upper bound $(vq)^{s+1}-(vq)^{s-1}(vq-1)$. On the other hand, Corollary \ref{corol7} item $(1)$ yields the upper bound  $v^{s+1} \cdot [q^{s+1}-q^{s-1}(q-1)]=(vq)^{s+1}-(vq)^{s-1}v^2(q-1)$.
A closer look reveals that $v^2(q-1)> vq-1$ if and only if $q > \frac{v+1}{v}$. Since $1< \frac{v+1}{v} < 2\leq q$, the inequality $v^2(q-1)> vq-1$ holds.
Therefore, the upper bound from Corollary \ref{corol7} item (1) improves that in  Corollary \ref{caseodd}.

Similarly, let us analyze the upper bounds on $K_{vq}^{\mathcal{R}}(2,s,s)$ for $v$ a prime power. Corollary \ref{casevencai} yields the upper bound  $(vq)^{s}-(vq)^{s-2}(vq-1)$. In contrast, Corollary \ref{corol7} item $(2)$ implies the upper bound $v^s \cdot [q^{s}-q^{s-2}(q-1)]=(vq)^s-(vq)^{s-2}v^2(q-1)$.
By comparing $vq-1$ and $v^2(q-1)$ as we did before, the upper bound from Corollary \ref{corol7} item (2) improves that from  Corollary \ref{casevencai}.

\begin{example} 
We display in Table \ref{tab2} some particular instances showing how much improvement the upper bounds obtained by Corollary \ref{corol7} offer when comparing with Corollaries \ref{caseodd}
and  \ref{casevencai} of Section \ref{sec5}.
Given the parameters $v$ a prime power, $q\geq 2$, $m\in\{2,3\}$, and $s\geq 3$, the penultimate column in  Table \ref{tab2} presents the upper bounds on $K_{vq}^{\mathcal{R}}(m,s,R)$ obtained by Corollary \ref{caseodd}
or  Corollary \ref{casevencai}. The last column in  Table \ref{tab2} indicates the upper bounds derived by  Corollary \ref{corol7} item (1) or item (2). 

\begin{table}[ht]
\centering
\caption{Sample of new upper bounds on $K_{vq}^{\mathcal{R}}(m,s,R)$ for $m=2$ or $m=3$.}
\label{tab2}
\begin{tabular}{|c|c|c|c|c|c|l|l|}
  \hline
  $v$ & $q$ & $v\cdot q$ & $m=3$ & $s$ & $R=2s-1$ & Corollary \ref{caseodd} &  Corollary \ref{corol7} (1)    \\ \hline
  2 & 2 & 4 & 3 & 3 & 5 & 208  & 192   \\
  2 & 2 & 4 & 3 & 4 & 7 & 832  & 768   \\
  2 & 2 & 4 & 3 & 5 & 9 & 3328 & 3072   \\
  2 & 3 & 6 & 3 & 3 & 5 & 1116 & 1008 \\
  2 & 3 & 6 & 3 & 4 & 7 & 6696 & 6048 \\
  2 & 3 & 6 & 3 & 5 & 9 & 40176 & 36288 \\
  3 & 3 & 9 & 3 & 3 & 5 & 5913 & 5103 \\
  3 & 3 & 9 & 3 & 4 & 7 & 53217 & 45927 \\ 
  3 & 3 & 9 & 3 & 5 & 9 & 478953 & 413343 \\
  4 & 2 & 8 & 3 & 3 & 5 & 3648 & 3072 \\ 
  4 & 2 & 8 & 3 & 4 & 7 & 29184 & 24576 \\
  4 & 2 & 8 & 3 & 5 & 9 & 233472 & 196608 \\
  4 & 3 & 12 & 3 & 3 & 5 & 19152 & 16128 \\ 
  4 & 3 & 12 & 3 & 4 & 7 & 229824 & 193536 \\
  4 & 3 & 12 & 3 & 5 & 9 & 2757888 & 2322432 \\
  \hline \hline
  $v$ & $q$ & $v\cdot q$ & $m=2$ & $s$ & $R=s$ & Corollary \ref{casevencai}   & Corollary \ref{corol7} (2)    \\ \hline
  2 & 2 & 4 & 2 & 3 & 3 & 52 & 48   \\
  2 & 2 & 4 & 2 & 4 & 4 & 208 & 192  \\
  2 & 2 & 4 & 2 & 5 & 5 & 832 & 768  \\
  2 & 3 & 6 & 2 & 3 & 3 & 186 & 168  \\
  2 & 3 & 6 & 2 & 4 & 4 & 1116 & 1008  \\
  2 & 3 & 6 & 2 & 5 & 5 & 6696 & 6048  \\
  3 & 3 & 9 & 2 & 3 & 3 & 657 & 567 \\
  3 & 3 & 9 & 2 & 4 & 4 & 5913 & 5103 \\
  3 & 3 & 9 & 2 & 5 & 5 & 53217 & 45927 \\
  4 & 2 & 8 & 2 & 3 & 3 & 456 & 384 \\
  4 & 2 & 8 & 2 & 4 & 4 & 3648 & 3072 \\
  4 & 2 & 8 & 2 & 5 & 5 & 29184 & 24576 \\
  4 & 3 & 12 & 2 & 3 & 3 & 1596 & 1344 \\
  4 & 3 & 12 & 2 & 4 & 4 & 19152 & 16128 \\
  4 & 3 & 12 & 2 & 5 & 5 & 229824 & 193536 \\
\hline
\end{tabular}
\end{table}
\end{example}

\section{Conclusion and further work} 

In this work, we obtain new recursive relations and upper bounds for two different but related combinatorial objects: ordered covering arrays (Section~\ref{sec4}) and covering codes in NRT spaces (Section~\ref{sec5}). In Section~\ref{sec6}, we explore new connections between these two objects and improve several upper bounds for covering codes in NRT spaces as exemplified in Tables \ref{tab3} and \ref{tab2}.

We believe that more results for ordered covering arrays can be explored with different approaches that build on the connections established here.
In addition, results for covering codes can be investigated for any finite poset with analogous connections to appropriate variable strength covering arrays. It is possible that known results on variable strength covering arrays~\cite{raaphorst2013variable,raaphorst2018variable} can be used to improve upper bounds in covering codes for other poset metrics.
There is a lot to explore in this area as much less is known for covering codes  than for minimum distance codes under the poset metric~\cite{firer}.


{\bf Acknowledgments}

A. G. Castoldi was supported by Coordena\c{c}\~{a}o de Aperfei\c{c}amento de Pessoal de N\'{i}vel Superior (CAPES) of Brazil, Science without Borders Program, under Grant Agreement 99999.003758/2014-01.

E. L. Monte Carmelo was partially supported by Conselho Nacional de Desenvolvimento Cient\'{i}fico e Tecnol\'{o}gico (CNPq) - Minist\'{e}rio da Ci\^{e}ncia, Tecnologia e Inova\c{c}\~{a}o (MCTI), under Grant Agreement 311703/2016-0.

L. Moura, D. Panario, and B. Stevens were supported by discovery grants from Natural Sciences and Engineering Research Council of Canada (NSERC).



\begin{thebibliography}{30}








\bibitem{blokhuis1984more}
Blokhuis, A., Lam, C.W.H.: More coverings by rook domains. J. Combin. Theory Ser. A, \textbf{36}(2), 240--244 (1984).

\bibitem{brualdi1995codes}
Brualdi, R.A., Graves, J.S., Lawrence, K.M.: Codes with a poset metric. Discrete Math., \textbf{147}(1-3), 57--72 (1995).

\bibitem{carnielli1985covering}
Carnielli, W.A.: On covering and coloring problems for rook domains. Discrete Math., \textbf{57}(1-2), 9--16 (1985).

\bibitem{castoldi2015covering}
Castoldi, A.G., Monte Carmelo, E.L.: The covering problem in Rosenbloom-Tsfasman spaces. Electron. J. Combin., \textbf{22}(3), paper 3.30 (2015).

\bibitem{castoldi2018partial}
Castoldi, A.G., Monte Carmelo, E.L., da Silva, R.: Partial sums of binomials, intersecting numbers,
and the excess bound in Rosenbloom-Tsfasman space.  Comput. Appl. Math., \textbf{38}(55) (2019).

\bibitem{castoldi2019cai}
Castoldi, A.G., Monte Carmelo, E.L., Moura L., Panario D., Stevens B.: Bounds on Covering Codes in RT Spaces Using Ordered Covering Arrays. In: Ciric M., Droste M., Pin JÉ. (eds) Algebraic Informatics. CAI 2019. Lecture Notes in Computer Science, vol. \textbf{11545}, 102-111. Springer (2019).

\bibitem{castoldi2017ordered}
Castoldi, A.G, Moura, L., Panario, D., Stevens, B.: Ordered orthogonal array construction using LFSR sequences. IEEE Trans. Inform. Theory, \textbf{63}(2), 1336--1347 (2017).

\bibitem{Chateauneuf} Chateauneuf, M.,  Kreher, D.L.: On the state of strength-three covering arrays. J. Combin. Des. \textbf{10},  217--238 (2002).

\bibitem{cohen1997covering}
Cohen, G., Honkala, I., Litsyn, S., Lobstein, A.:\emph{Covering Codes}. North Holland
Mathematical Library, vol. \textbf{54}. Elsevier, Amsterdam (1997).

\bibitem{colbourn2004combinatorial}
Colbourn, C.J.: Combinatorial aspects of covering arrays. Le Matematiche, \textbf{59}, 125--172 (2004).

\bibitem{colbourn2008strength}
Colbourn, C.J.: Strength two covering arrays: Existence tables and projection.
 Discrete Math., \textbf{308}(5-6), 772--786 (2008).

\bibitem{colbourn2010covering}
Colbourn, C.J.,  K{\'e}ri, G., Soriano, P.P.R., Schlage-Puchta, J.-C.: Covering and radius-covering arrays: Constructions and classification. Discrete Appl. Math., \textbf{158}(11), 1158--1180 (2010).

\bibitem{HandbookColbourn} Colbourn, C. J., Dinitz, J. H.: \emph{Handbook of Combinatorial Designs}, 2nd ed., B. A. Davey and H. A Priestley, Eds. Boca Raton, FL, USA: Chapman $\&$ Hall, 2006.

\bibitem{firer}
Firer, M., Alves, M. M. S., Pinheiro, J. A.,  Panek, L.: \emph{Poset Codes: Partial Orders, Metrics and Coding Theory.} Switzerland, SpringerBriefs in Mathematics (2018).

\bibitem{hedayat2012orthogonal}
Hedayat, S., Sloane, N.J.A., Stufken, J.: \emph{Orthogonal arrays: theory and applications}. New York, Springer (2012).

\bibitem{kleitman1973families}
Kleitman, D.J., Spencer, J.: Families of $k$-independent sets. Discrete Math., \textbf{6}(3), 255--262 (1973).

\bibitem{krikorian2011combinatorial}
Krikorian, T.: Combinatorial constructions of ordered orthogonal
arrays and ordered covering arrays. MSC thesis, Dept. Math., Ryerson Univ., Toronto, ON,
Canada (2011).

\bibitem{lawrence1996combinatorial}
Lawrence, K.M.: A combinatorial characterization of $(t, m, s)$-nets in base $b$. J. Combin. Designs, \textbf{4}(4), 275--293 (1996).

\bibitem{mullen2016survey}
Moura, L., Mullen, G., Panario, D.: Finite field constructions of combinatorial arrays. Des. Codes and Cryptogr, \textbf{78}, 197--219 (2016).

\bibitem{mullen1996equivalence}
Mullen, G., Schmid, W.: An equivalence between $(t, m, s)$-nets and strongly orthogonal hypercubes. J. Combin. Theory Ser. A, \textbf{76}(1), 164--174 (1996).

\bibitem{N} Niederreiter, H.: Point sets and sequences with small
discrepancy. Monatsh. Math., \textbf{104}(4),  273--377 (1987).

\bibitem{raaphorst2013variable} Raaphorst, S.: Variable strength covering arrays. Ph.D. thesis,
School Elect. Eng. Comput. Sci., Univ. Ottawa, Ottawa, ON, Canada (2013).

\bibitem{raaphorst2018variable} Raaphorst, S.,  Moura, L., Stevens, B.: Variable strength covering arrays. J. Combin. Designs, \textbf{26}(9), 417--438 (2018).

\bibitem{rosenbloom1997codes}
Rosenbloom, M.Y., Tsfasman, M.A.: Codes for the $m$-metric. Problems Inform. Transmission  \textbf{33}(1), 45--52  (1997).

\bibitem{yildiz2010covering}
Yildiz, B., Siap, I., Bilgin, T., Yesilot, G.: The covering problem for finite rings with respect to the RT-metric. Appl. Math. Lett., \textbf{23}(9), 988--992 (2010).



\end{thebibliography}


\end{document}